\newcommand{\norm}[1]{\left\Vert#1\right\Vert}
\newcommand{\abs}[1]{\left\vert#1\right\vert}
\newcommand{\set}[1]{\left\{#1\right\}}
\newcommand{\E}[1]{\mathbb{E}\left[#1\right]}
\newcommand{\diag}[1]{\mathrm{diag}(#1)}
\newcommand{\var}[1]{\mathrm{Var}\left(#1\right)}
\def\is{\mathrm{IS}}
\title{On the error rate of importance sampling with randomized quasi-Monte Carlo\thanks{Submitted to the editors DATE.
		\funding{This work of the first author was funded by the National Science Foundation of China (No. 12071154), Guangdong Basic and Applied Basic Research Foundation (No. 2021A1515010275), Guangzhou Science and Technology Program (No. 202102020407). And the third author was funded by the National Science Foundation of China (No. 720711119).}}}
\author{Zhijian He\thanks{Corresponding author. School of Mathematics, South China University of Technology, Guangzhou 510641, People's Republic of China (\email{hezhijian@scut.edu.cn}).}
	\and Zhan Zheng\thanks{Department of Mathematical Sciences, Tsinghua University, Beijing 100084, People's Republic of China (\email{zhengz15@mails.tsinghua.edu.cn}).}
	\and Xiaoqun Wang\thanks{Department of Mathematical Sciences, Tsinghua University, Beijing 100084, People's Republic of China (\email{wangxiaoqun@mail.tsinghua.edu.cn}).}}
\begin{document}
	
	\maketitle
	
	\begin{abstract}
		Importance sampling (IS) is valuable in reducing the variance of Monte Carlo sampling for many areas, including finance, rare event simulation, and Bayesian inference. It is natural and obvious to combine quasi-Monte Carlo (QMC) methods with IS to achieve a faster rate of convergence. However, a naive replacement of Monte Carlo with QMC may not work well. This paper investigates the convergence rates of randomized QMC-based IS for estimating integrals with respect to a Gaussian measure, in which the IS measure is a Gaussian or $t$ distribution. We prove that if the target function satisfies the so-called boundary growth condition and the covariance matrix of the IS density has eigenvalues no smaller than 1, then randomized QMC with the Gaussian proposal has a root mean squared error of $O(N^{-1+\epsilon})$ for arbitrarily small $\epsilon>0$. Similar results of $t$ distribution as the proposal are also established. These sufficient conditions help to assess the effectiveness of IS in QMC. For some particular applications, we find that the Laplace IS, a very general approach to approximate the target function by a quadratic Taylor approximation around its mode, has eigenvalues smaller than 1, making the resulting integrand less favorable for QMC. From this point of view, when using Gaussian distributions as the IS proposal, a change of measure via Laplace IS may transform a favorable integrand into unfavorable one for QMC although the variance of Monte Carlo sampling is reduced. We also study the effect of positivization trick on the error rate when the integrand has mixed sign. If the smooth positivization proposed by Owen and Zhou (2000) is used, the rate $O(N^{-1+\epsilon})$ is retained. This is not the case if taking the positive and negative parts of the integrand. We also give some examples to verify our propositions and warn against naive replacement of MC with QMC under IS proposals. Numerical results suggest that using Laplace IS with $t$ distributions is more robust than that with Gaussian distributions.
	\end{abstract}
	
	\begin{keywords}
		Importance sampling, Boundary growth condition, Quasi-Monte Carlo
	\end{keywords} 
	
	\begin{AMS}
		41A63, 65D30, 97N40
	\end{AMS}
	
	\section{Introduction}
	Many problems in finance and statistics such as financial derivative pricing and Bayesian computation, can be reduced to the computations of expectations. Most of the expectations are integrals with respect to a Gaussian measure. For example, the underlying assets are usually driven by Brownian motions in security pricing, and the prior is typically assumed to be Gaussian for Bayesian computation. In this paper, we focus on the problem of estimating integrals with respect to a Gaussian measure. Monte Carlo (MC) methods become more often than ever the only computational feasible means. MC methods have many virtues, the most prominent is that the smoothness of the integrand is not needed (except that the integrand is square integrable) and that their convergence rate is dimension-independent. However, with the sample size of $N$, the crude MC has a convergence rate of $O(N^{-1/2})$ which may be too slow for practical applications. This paper aims at speeding up the MC methods by incorporating importance sampling (IS) and quasi-Monte Carlo (QMC) methods. 
	
	IS is a popular variance reduction method in the MC literature. It has the capacity to produce orders of magnitude for variance reduction, but it may also result in an estimate
	with infinite variance if it is not properly used. Glasserman et al. \cite{Glasserman1999b} studied IS for security pricing. Furthermore, IS has the advantage of handling rare events \cite{rubino2009rare}. More research find out that IS is more than just a variance reduction method. It can be used to study one distribution while sampling from another. We refer to Chapter 9 of \cite{owen2013monte} for a comprehensive review on IS.
	 
	In this paper, we investigate two common choices of importance densities, namely, the optimal drift importance sampling (ODIS) and the Laplace importance sampling (LapIS). The ODIS uses a multivariate normal density $N(\bm \mu_\star,\bm \Sigma)$ as the IS density ($\bm \Sigma$ is the original covariance matrix), and the LapIS uses a general multivariate normal density $N(\bm \mu_\star,\bm \Sigma_\star)$ as the IS density, where the mean and the covariance matrix are chosen to match the mode and the curvature of the integrand. The LapIS has been applied in statistics \cite{booth1999maximizing,Kuk1999Laplace,schi:2020}. We also replace the  multivariate normal distribution $N(\bm\mu,\bm \Sigma)$ with the multivariate $t$ distribution $t_{\nu}(\bm \mu,\bm \Sigma)$ as the IS proposal, where $\nu$ is the degree of freedom.
	
	QMC methods are deterministic versions of MC methods, which are based on low-discrepancy points or quasi-random numbers. In the last three decades, QMC methods are widely used in finance and statistics. For a $d$-dimensional integral, QMC quadrature rule yields a deterministic error bound $O(N^{-1}(\log N)^d)$ for certain regular  functions \cite{Niederreiter1992}, which is asymptotically superior to the canonical MC rate $O(N^{-1/2})$. In practice, we often use randomized QMC (RQMC), which not only keeps the convergence rate of QMC but also makes the error estimation possible. Different kinds of RQMC quadratures were proposed in the literature, such as random shifted lattice rules (see, e.g., \cite{sloan:joe:1994,LEcuyer2000}) and scrambled digital nets (see, e.g.,  \cite{owen1997b,owen1997a}). This work  is based on scrambled digital nets. We refer to \cite{dick:2010,lecu:2002} for details on QMC and RQMC. 
	
	In this paper, we study the effect of using IS in QMC. It is more challenging to derive effective variance reduction methods starting from a QMC aspect than from the MC aspect, since some properties of the integrand (such as the smoothness and the effective dimension) which are irrelevant in MC can be crucial in QMC. Particularly, \cite{Hickernell2005} found that the control variate has different effects in MC and QMC. What is the key difference of developing IS procedures in MC and QMC settings? Would using IS accelerate or reduce the rate of convergence in QMC? Dick et al. \cite{dick:2019} provided a weighted discrepancy bound of QMC-based IS and thus obtained an explicit error bound for sufficiently regular integrands. From the perspective of numerical experiments, \cite{Kuo2008} found that QMC outperformed MC for $d=25$ with log-likelihood integrals, and \cite{Zhang2021} found that neither ODIS nor LapIS is dominant, and the effect of using IS depends on the problem. This motivates us to study the error rate of QMC/RQMC when using IS and look at what factors have an impact on the convergence rate. To this end, a theoretical analysis of the impact of IS density on the regularity of the integrand is presented in this paper. We find that IS can bring enormous gains with a root mean squared error (RMSE) rate of nearly $O(N^{-1})$, beating MC significantly. It can also backfire, yielding an estimate with a larger variance than plain MC. 
	
	This paper is organized as follows. We review briefly QMC and RQMC methods, IS, ODIS and LapIS in Section~\ref{sec:pre}. In Section~\ref{sec:main}, we present sufficient conditions for assessing the RMSE rate of the RQMC-based IS estimator. In Section~\ref{sec:mult}, we consider the multivariate $t$ distribution as the IS proposal. In Section~\ref{sec:exam}, some examples are provided to show that inappropriate choice of IS density may backfire. Particularly, we compare the performance of ODIS and LapIS for estimating posterior expectations under Bayesian Logistic regression model. Section~\ref{sec:conc} concludes this paper. A lengthy proof of the main result is deferred to Appendix.
	
	\section{Preliminaries}\label{sec:pre}
	Consider the problem of estimating an integral
	\begin{equation}
		\label{eq:target}
		C = \int_{\mathbb{R}^d} G(\bm z)p(\bm z;\bm \mu_0,\bm \Sigma_0)d\bm z,
	\end{equation}
	where $p(\bm z;\bm \mu,\bm \Sigma)$ denotes the probability density function of $d$-dimensional normal distribution with the mean $\bm \mu$ and the covariance matrix $\bm \Sigma$. In this paper, we perform IS with multivariate normal or $t$ distribution as the proposal. We start with normal and leave $t$ distribution in Section~\ref{sec:mult}. Let $\bm L$ be the square root of the covariance matrix $\bm \Sigma$ satisfying $\bm L\bm L^T=\bm \Sigma$. Let $\bm I_d$ denote the square matrix of order $d$. By a change of measure and a change of variables, the integral \eqref{eq:target} can be changed to
	\begin{eqnarray}
		C &=& \int_{\mathbb{R}^d} G(\bm z) \frac{p(\bm z;\bm \mu_0,\bm \Sigma_0)}{p(\bm z;\bm \mu,\bm \Sigma)} p(\bm z;\bm \mu,\bm \Sigma)d\bm z \nonumber\\ 
		&=& \int_{\mathbb{R}^d} G(\bm \mu+\bm L\bm z) \frac{p(\bm \mu+\bm L\bm z;\bm \mu_0,\bm \Sigma_0)}{p(\bm \mu+\bm L\bm z;\bm \mu,\bm \Sigma)}p(\bm z;\bm 0,\bm I_d)d\bm z\notag\\
		&=& \int_{\mathbb{R}^d} G(\bm \mu+\bm L\bm z) W(\bm z)p(\bm z;\bm 0,\bm I_d)d\bm z\notag\\
		&=& \int_{\mathbb{R}^d} G_{\is}(\bm z) p(\bm z;\bm 0,\bm I_d) d\bm z\notag\\
		&:=& I(G_{\is}),\label{eq:ISform}
	\end{eqnarray}
	where the likelihood ratio (LR) function
	\begin{align}	
		W(\bm z)&:=W(\bm z;\bm\mu,\bm \Sigma) = \frac{p(\bm \mu+\bm L\bm z;\bm \mu_0,\bm \Sigma_0)}{p(\bm \mu+\bm L\bm z;\bm \mu,\bm \Sigma)} \notag\\&= \frac{\det (\bm \Sigma)^{1/2}}{\det (\bm \Sigma_0)^{1/2}} \exp\left\lbrace\frac{1}{2}\bm z^T \bm z - \frac{1}{2}(\bm \mu-\bm \mu_0 +\bm L\bm z)^T \bm \Sigma_0^{-1} (\bm \mu-\bm \mu_0 +\bm L\bm z)\right\rbrace,\label{eq:weightf}
	\end{align}
	and $G_{\is}(\bm z) = G(\bm \mu + \bm L \bm z)W(\bm z)$. If we do not use IS, it suffices to take $\bm\mu =\bm\mu_0$ and $\bm \Sigma=\bm \Sigma_0$.
	
	\subsection{Quasi-Monte Carlo methods}
	A crude MC estimate for \eqref{eq:ISform} is
	\begin{equation}
		\label{eq:crude}
		\hat{I}_N(G_{\is}) = \frac 1 N \sum_{i=1}^N  G_{\is}(\bm z_i),
	\end{equation}
	where $\bm z_i$ are independent and identically distributed (i.i.d.) standard normals. The RMSE of the crude MC is
	$$
		\sqrt{\mathbb{E}[(\hat{I}_N(G_{\is})-I(G_{\is}))^2]}=\frac {\sigma}{\sqrt{N}} ,
	$$
	where $\sigma^2$ is the variance of the integrand $G_{\is}(\bm z)$, defined as
	\begin{equation}\label{eq:MCvariance}
		\sigma^2:=\sigma^2(\bm \mu,\bm \Sigma) = \int_{\mathbb{R}^d} (G_{\is}(\bm z) -I(G_{\is}))^2p(\bm z;\bm 0,\bm I_d) d\bm z.
	\end{equation}
	Obviously, MC has an RMSE rate $O(N^{-1/2})$ if the integrand is square integrable. 
	
	To accelerate the rate of convergence, one may use QMC quadrature rule instead. QMC uses low-discrepancy points in the unit cube $[0,1)^d$. More specifically, QMC quadrature rule sets
	\begin{equation}
		\label{eq:crudeQMC}
		\hat{I}_N(G_{\is}) = \frac 1 N \sum_{i=1}^N  G_{\is}(\Phi^{-1}(\bm u_i)),
	\end{equation}   
	where  $\Phi(\cdot)$ is the cumulative distribution function (CDF) of the standard normal distribution, $\Phi^{-1}(\cdot)$ is its inverse (applied componentwise), and $\{\bm u_1,\dots,\bm u_N\}:=\mathcal{P}$ is a low-discrepancy point set in $[0,1)^d$. There are two main strategies for constructing low-discrepancy point sets: digital nets and lattice
	approaches. We refer to \cite{Niederreiter1992,dick:2010} for various constructions of such points. If $\bm u_1,\dots,\bm u_N$ are i.i.d. samples from uniform distribution $U(0,1)^d$, we arrive at the MC estimate \eqref{eq:crude} by taking $\bm z_i=\Phi^{-1}(\bm u_i)$. The QMC error bound is given by the well-known Koksma-Hlawka inequality \cite{Niederreiter1992}
	\begin{equation}\label{eq:hk}
		|\hat{I}_N(G_{\is})-I(G_{\is})| \leq D^*(\mathcal{P}) V_\mathrm{HK}(G_{\is}\circ \Phi^{-1}),
	\end{equation}
	where $D^*(\mathcal{P})$ is the star discrepancy of the point set $\mathcal{P}$ and $V_\mathrm{HK}(\cdot)$ is the variation (in the sense of Hardy and Krause) of a function defined over the unit cube. Several digital sequences achieve a star discrepancy $O(N^{-1}(\log N)^d)$. Therefore, if the variation is bounded, QMC integration has a deterministic error bound of $O(N^{-1}(\log N)^d)$, which is asymptotically superior to that of MC for a fixed dimension $d$.
	
	To facilitate the error estimation, one usually uses RQMC in which $\bm u_i$ are randomized suitably while keeping the low-discrepancy property. Among various RQMC methods, the scrambling technique proposed by \cite{Owen1995} gains its popularity in randomizing digital nets and sequences. Scrambled net quadrature has an RMSE of $o(N^{-1/2})$ for any squared integrable integrand, and has a faster rate $O(N^{-3/2}(\log N)^{(d-1)/2})$ for smooth integrands \cite{owen1997a,owen:2008}. We should note that the integrand $G_{\is}\circ \Phi^{-1}$ may have singularities along the surface of the unit cube due to the mapping $\Phi^{-1}(\cdot)$, resulting in an unbounded integrand. Owen \cite{owen:2006} studied the error rate of QMC and RQMC methods for such  unbounded integrands. In this paper, we generalize the results of \cite{owen:2006} to provide a rigorous error analysis for the IS estimator \eqref{eq:crudeQMC}. 
	
	\subsection{Two commonly used IS methods}
	
	How to choose $\bm\mu$ and $\bm \Sigma$ in the IS density? From the perspective of MC simulation, a good IS aims at reducing the variance $\sigma^2(\bm \mu,\bm \Sigma)$ given by \eqref{eq:MCvariance}
	as much as possible. Generally, it is hard to find the minimizer of the variance $\sigma^2(\bm \mu,\bm \Sigma)$. Assume that $G(\bm z)\ge 0$ for all $\bm z$. As a practical strategy, one may choose an IS density $p(\bm y;\bm \mu,\bm \Sigma)$ to mimic the behavior of the optimal (zero-variance) IS density
	$$
		p_{opt}(\bm z) := \frac{1}{C}G(\bm z)p(\bm z;\bm \mu_0,\bm \Sigma_0).
	$$
	Note that the optimal IS density involves the value of original integral, which is unknown. To this end, LapIS approximates the optimal IS density $p_{opt}(\bm z)$ by a quadrature Taylor approximation around its mode. Let $H(\bm z) =\log (p_{opt}(\bm z))$.
	Suppose that $H(\bm z)$ is differentiable and unimodal. Let $\bm \mu_{\star}$ be the mode of $H(\bm z)$, i.e.,
	$$
		\bm \mu_{\star} = \arg \max_{\bm z \in \mathbb{R}^d} H(\bm z)= \arg \max_{\bm z \in \mathbb{R}^d} G(\bm z)p(\bm z;\bm \mu_0,\bm \Sigma_0).
	$$
	Taking a second-order Taylor approximation around the mode $\bm \mu_{\star}$ gives
	$$
		H(\bm y) \approx H(\bm \mu_{\star}) - \frac{1}{2}(\bm y - \bm \mu_{\star})^T \bm \Sigma_{\star}^{-1}(\bm y - \bm \mu_{\star}),
	$$
	where
	\begin{equation}
		\label{eq:solveoptvar}
		\bm \Sigma_{\star} = -\nabla^2 H(\bm \mu_{\star})^{-1} = (\bm\Sigma_0 - \nabla^2\log(G(\bm \mu_{\star})))^{-1}.
	\end{equation}
	We then have
	$$
		p_{opt}(\bm z) = \exp (H(\bm z)) \approx \exp\left(H(\bm \mu_{\star})-\frac{1}{2}(\bm z - \bm \mu_{\star})^T \bm \Sigma_{\star}^{-1}(\bm z - \bm \mu _{\star})\right)\propto p(\bm z;\bm \mu_{\star},\bm \Sigma_{\star}). 
	$$	
	LapIS thus chooses $\bm \mu= \bm \mu_{\star}$ and $\bm \Sigma=\bm \Sigma_{\star}$ because $p(\bm z;\bm \mu_{\star},\bm \Sigma_{\star})$ is close to the optimal IS density (at least partially). 	
	On the other hand, ODIS chooses the drift $\bm \mu= \bm \mu_{\star}$ and leaves the covariance matrix unchanged, i.e.,  $\bm \Sigma=\bm \Sigma_0$. 
	
	How to assess the performance of IS? We should note that LapIS is not necessarily better than ODIS in the MC setting, depending on  how close the optimal IS density $p_{opt}(\bm z)$ is to a Gaussian density. LapIS is very effective if $p_{opt}(\bm z)$ is fitted well by a Gaussian distribution. On the other hand, in the QMC setting, there are many factors that may affect the performance of IS. In the following, we try to resolve this question for a class of integrands.
	
	\section{Main results}  \label{sec:main}
	This section provides rigorous theoretical results for RQMC-based IS estimates. To this end, we first provide RMSE of scrambled net quadrature for a class of unbounded integrands under a boundary growth condition introduced by \cite{owen:2006}. Let $(D_v G)(\bm z)$ denote
	the derivative of the function $G$ with respect to each $z_j$ once for all $j \in  v\subset 1{:}d := \{1,2,...,d\}$. If $v=\emptyset$, we make a convention that $(D_v G)(\bm z)=G(\bm z)$.  Let $1\{\cdot\}$ be an indicator function taking values in $\{0,1\}$.

\begin{theorem}\label{thm:mse}
	Let $f(\bm u)$ be a real-valued function defined over $(0,1)^d$ satisfying
	\begin{equation}\label{eq:grow}
		\abs{(D_vf)(\bm u)}\leq B\prod_{i=1}^d[\min(u_i,1-u_i)]^{-B_i-1\{i\in v\}}
	\end{equation}
	for some $B_i\in(0,1)$, some $B<\infty$ and all $v\subseteq 1{:}d$.
	Suppose that $\bm u_1,\dots,\bm u_N$ are a scrambled $(t,m,d)$-net in base $b\ge 2$ with $N=b^m$. Then the scrambled net quadrature yields an RMSE 
	$$\sqrt{\E{\left(\frac{1}{N}\sum_{i=1}^N f(\bm u_i)-\int_{(0,1)^d}f(\bm u) d \bm u\right)^2}}=O(N^{-1+\max_i B_i+\epsilon})$$ for arbitrarily small $\epsilon>0$.		
\end{theorem}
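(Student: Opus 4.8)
The plan is to follow and quantitatively sharpen Owen's \cite{owen:2006} treatment of unbounded integrands by means of a $b$-adic decomposition of the cube. I would partition $(0,1)^d$ into $b$-adic boxes organized by a \emph{resolution vector} $\bm k=(k_1,\dots,k_d)\in\mathbb{Z}_{\ge 0}^d$: a box of resolution $\bm k$ has side $b^{-k_j}$ in coordinate $j$ and sits at distance of order $b^{-k_j}$ from one of the faces $\{u_j=0\}$ or $\{u_j=1\}$, and there are only $O(b^d)$ such boxes for each $\bm k$. On a box of resolution $\bm k$, \eqref{eq:grow} gives $\abs{f(\bm u)}\le C\prod_j b^{k_jB_j}$, and rescaling the box onto $(0,1)^d$ turns \eqref{eq:grow} into the statement that \emph{every} mixed partial $D_v\tilde f$ of the rescaled function is bounded by $M_{\bm k}:=C'\prod_j b^{k_jB_j}$ --- the factor $b^{k_j}$ produced by $1\{j\in v\}$ is exactly cancelled by the rescaling Jacobian --- so $V_{\mathrm{HK}}(\tilde f)\le 2^d M_{\bm k}$. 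The second ingredient is the standard structural fact that the points of a scrambled $(t,m,d)$-net lying in a $b$-adic box of volume $b^{-\abs{\bm k}}$, once rescaled, again form a $(t,m-\abs{\bm k},d)$-net --- exactly $b^{m-\abs{\bm k}}$ of them when $\abs{\bm k}\le m-t$ --- and that under nested uniform scrambling every point lies in the open cube almost surely, so $f(\bm u_i)$ is finite a.s. Writing $\hat I_N(f)-I(f)$ as the (a.s.\ absolutely convergent) sum of per-box errors is then legitimate.

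I would split the boxes into \emph{shallow} ones ($\abs{\bm k}\le m-t$) and \emph{deep} ones ($\abs{\bm k}>m-t$). For a shallow box the Koksma--Hlawka inequality applied to the rescaled sub-net yields a contribution of order $b^{-\abs{\bm k}}\cdot b^{-(m-\abs{\bm k})}(\log N)^{d-1}\cdot M_{\bm k}=b^{-m}(\log N)^{d-1}M_{\bm k}$; summing over the $O(b^d)$ boxes of each resolution and over $\bm k$, and using $b^m=N$, the shallow part is of order $N^{-1}(\log N)^{d-1}\sum_{\bm k}M_{\bm k}$. This is the step at which $\max_iB_i$, rather than $\sum_iB_i$, emerges: since every relevant $\bm k$ obeys $\sum_j k_j\le m$,
\[
\sum_{\bm k\in\mathbb{Z}_{\ge 0}^d:\ \sum_j k_j\le m}\ \prod_j b^{k_jB_j}\ \le\ (m+1)^d\, b^{m\max_j B_j},
\]
because $\sum_j k_jB_j\le\big(\sum_j k_j\big)\max_jB_j\le m\max_jB_j$, the extremum being attained by spending the whole ``budget'' $m$ on the single worst coordinate. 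Hence the shallow part is $O(N^{-1+\max_iB_i}(\log N)^{O(d)})=O(N^{-1+\max_iB_i+\epsilon})$.

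For the deep boxes the net need no longer be equidistributed inside them, so I would bound their combined contribution by a deterministic ``bias'' piece plus a random ``fluctuation'' piece. The box integrals satisfy $\int_{\mathrm{box}}\abs{f}\le C b^{-\abs{\bm k}}M_{\bm k}=C'\prod_j b^{-k_j(1-B_j)}$, and summed over $\abs{\bm k}>m-t$ these form a tail of a convergent multi-geometric series whose slowest-decaying direction has rate $1-\max_jB_j$, giving $O(b^{-m(1-\max_jB_j)}(\log N)^{O(d)})=O(N^{-1+\max_iB_i+\epsilon})$. The fluctuation is $\frac1N\sum_i g(\bm u_i)-\int g$, where $g$ equals $f$ on the union $D$ of the deep boxes and vanishes elsewhere; this is mean zero (RQMC being unbiased), and its variance is controlled by the scrambled-net bound $\mathrm{Var}\le \Gamma_{b,d}N^{-1}\norm{g}_2^2$. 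When $\max_iB_i<\frac12$ one has $\norm{g}_2^2=\int_D f^2=O(N^{-(1-2\max_iB_i)})$, so this piece is again $O(N^{-1+\max_iB_i})$. Collecting the shallow part, the deep bias and the deep fluctuation, and absorbing the $(\log N)^{O(d)}$ factors into $N^{\epsilon}$, yields the claimed RMSE.

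The step I expect to be the main obstacle is the regime $\max_iB_i\ge\frac12$, which \eqref{eq:grow} permits: there $\int_D f^2$ diverges and the crude $L^2$ control of the fluctuation breaks down, so one must further subdivide the deep/boundary region into $b$-adic shells on which $f$ is bounded, estimate the fluctuation shell by shell, and sum the resulting geometric series --- together with making precise the ``$O(b^d)$ boxes per resolution'' count and the interaction of boxes near lower-dimensional faces. The remainder is the lengthy but essentially routine verification that these scale-by-scale estimates assemble, as in \cite{owen:2006}, into the stated bound.
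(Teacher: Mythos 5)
Your proof takes a genuinely different route from the paper's. The paper works globally, following Owen (2006): it truncates to the region $K(\eta)=\{\bm u:\prod_i\min(u_i,1-u_i)\ge\eta\}$, builds a single low-variation extension $f_\eta$ with $V_{\mathrm{HK}}(f_\eta)=O(\eta^{-\max_iB_i-\epsilon})$ and $\mu(\abs{f-f_\eta})=O(\eta^{1-\max_iB_i-\epsilon})$, applies Koksma--Hlawka once to $f_\eta$, controls $\E{\hat\mu_N(f-f_\eta)^2}$ by writing it as variance plus squared mean and invoking the fact that scrambled-net variance is at most $\Gamma$ times the Monte Carlo variance, and finally optimizes $\eta\propto N^{-1}$. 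You localize instead: Koksma--Hlawka is applied to the rescaled sub-net inside each shallow $b$-adic box, and the $\Gamma$ bound is reserved for the union of deep boxes. The two external inputs are identical and both arguments deliver the same rate; your multiscale version makes the appearance of $\max_iB_i$ rather than $\sum_iB_i$ completely transparent through the budget inequality $\sum_jk_jB_j\le m\max_jB_j$, at the cost of the (standard, but to-be-verified) facts that a scrambled $(t,m,d)$-net restricted to an elementary interval of volume $b^{-q}$ with $q\le m-t$ rescales to a scrambled $(t,m-q,d)$-net, and that there are only $O(b^d)$ boxes per resolution vector. Two small remarks: your shallow-box estimate is deterministic (holds with probability one), so squaring it for the RMSE is immediate; and the ``deep bias'' term is redundant once you use the exact unbiasedness of $\hat\mu_N(g)$, since the second moment of the deep contribution is then just its variance.

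Concerning the obstacle you flag for $\max_iB_i\ge1/2$: this is not a defect of your decomposition, because the theorem as stated is false in that regime. Take $d=1$ and $f(u)=u^{-3/4}$, which satisfies \eqref{eq:grow} with $B_1=3/4\in(0,1)$; since $f\ge0$, $\mu(f)<\infty$ but $\mu(f^2)=\infty$, and each $u_i$ is marginally uniform, we get $\E{\hat\mu_N(f)^2}\ge N^{-2}\E{f(u_1)^2}=\infty$, so the RMSE is infinite rather than $O(N^{-1/4+\epsilon})$. The paper's own proof silently carries the same restriction: its Lemma claims $\mu((f-f_\eta)^2)\le C_\epsilon\eta^{1-2\max_iB_i-\epsilon}$, whose left-hand side is infinite in this example, so the argument is only valid for $\max_iB_i<1/2$. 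Since every downstream application invokes the theorem with arbitrarily small $B_i$, that is the regime that matters, and there your proof is complete in outline. Rather than pursuing the shell-by-shell rescue (which cannot succeed, since the conclusion fails), you should simply add the hypothesis $\max_iB_i<1/2$.
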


It is clear that mean error (i.e., $L^1$ error) is bounded above by RMSE (i.e., $L^2$ error). Consequently, Theorem~\ref{thm:mse} generalizes the result of \cite[Theorem 5.7]{owen:2006} which studied the mean error other than the RMSE. It may be of independent interest for RQMC integration with unbounded functions.
The proof of Theorem~\ref{thm:mse} is non-trivial, which is deferred to the Appendix. We should note that  \cite[Theorem 3.4]{gobet:hal-03631879} also provided the same RMSE rate, but under an assumption that the joint density between random pairs of sequence points is bounded above. The work \cite{gobet:hal-03631879} claimed that the assumption is satisfied for a $(0,m,d)$-net in base $b\ge 2$ by leveraging \cite[Theorem 3.6]{wiart:2021}. Our Theorem~\ref{thm:mse} does not require such an assumption and holds for general scrambled $(t,m,d)$-nets. The result in Theorem~\ref{thm:mse} can be easily extended to the first $N$ points of a scrambled $(t,d)$-sequence without requiring the constraint $N=b^m$ on the sample size.

The condition \eqref{eq:grow} is actually the second growth condition described in Owen \cite{owen:2006}. Owen \cite{owen:2006b} and Basu and Owen \cite{basu:2016b} studied other types of growth conditions for point singularities and singularities along a diagonal in the square, respectively.  When $\max_i B_i< 1/2$, then $f$ is square integrable and RQMC has a faster RMSE rate than MC. It is clear that large values of $B_i$ correspond to more severe singularities.
If all $B_i$ are arbitrarily small,  a nearly $O(N^{-1})$ error rate can be achieved. We may say that the condition \eqref{eq:grow} with arbitrarily small $B_i>0$ is friendly to QMC. 

In the following, we assume that the integrand $G(\bm z)$ is smooth enough and may be unbounded. We work on the case of $G(\bm z)$ having  QMC-friendly singularities as formally stated in Assumption~\ref{assum:bgc}.

\begin{assum}[Boundary growth condition]\label{assum:bgc}
	Suppose $G(\bm z)$ is a real-valued function such that for arbitrarily small $B_i>0$, there exists a constant $B>0$ such that
	\begin{equation}
		\abs{(D_v G)(\bm z)}\leq B\prod_{i=1}^d [1-\Phi(\abs{z_i})]^{-B_i},\label{eq:dgup}
	\end{equation}
	for any $v\subset 1{:}d$, then we say that $G$ satisfies the `QMC-friendly' boundary growth condition.
\end{assum}

We take scrambled $(t,m,d)$-net in base $b\ge 2$ with $N=b^m$ as RQMC by default. We thus do not specify the details of RQMC in the following statements. 
We continue to use the denotations of $I(\cdot)$, $\hat{I}_N(\cdot)$ defined in \eqref{eq:ISform} and \eqref{eq:crudeQMC}, respectively. Unless otherwise specified, $\hat{I}_N(\cdot)$ denotes the RQMC quadrature rule in the following.

\begin{theorem}\label{thm:convergence}
		If $G(\cdot)$ satisfies Assumption~\ref{assum:bgc}, then for arbitrarily small $\epsilon>0$,
		$$\sqrt{\mathbb{E}[(\hat{I}_N(G)-I(G))]}=O(N^{-1+\epsilon}).$$ 
\end{theorem}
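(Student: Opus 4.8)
The plan is to reduce Theorem~\ref{thm:convergence} to Theorem~\ref{thm:mse} through the substitution $\bm z=\Phi^{-1}(\bm u)$ already built into the RQMC estimator \eqref{eq:crudeQMC}. Set $f(\bm u):=G(\Phi^{-1}(u_1),\dots,\Phi^{-1}(u_d))$ on $(0,1)^d$. The change of variables gives $\int_{(0,1)^d} f(\bm u)\,d\bm u=I(G)$ and $\frac1N\sum_{i=1}^N f(\bm u_i)=\hat I_N(G)$, so it suffices to verify that $f$ obeys the growth condition \eqref{eq:grow} with the exponents $B_i$ supplied by Assumption~\ref{assum:bgc}, and then invoke Theorem~\ref{thm:mse}.

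To verify \eqref{eq:grow}, observe that since $\Phi^{-1}$ is applied coordinatewise and $D_v$ differentiates each coordinate of $v$ exactly once, no higher-order chain-rule formula is needed: one simply gets
$$(D_v f)(\bm u)=(D_v G)\big(\Phi^{-1}(\bm u)\big)\prod_{i\in v}(\Phi^{-1})'(u_i),\qquad (\Phi^{-1})'(u)=\frac{1}{\phi(\Phi^{-1}(u))},$$
with $\phi$ the standard normal density. Two elementary facts close the estimate. First, $1-\Phi(\abs{\Phi^{-1}(u)})=\min(u,1-u)$ for all $u\in(0,1)$, which converts the right-hand side of \eqref{eq:dgup} into the form appearing in \eqref{eq:grow}. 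Second, the Mills-ratio bound $1-\Phi(t)\le\phi(t)/t$ for $t>0$ gives $(\Phi^{-1})'(u)\le\big[\abs{\Phi^{-1}(u)}\,(1-\Phi(\abs{\Phi^{-1}(u)}))\big]^{-1}$; combining this with the fact that $\abs{\Phi^{-1}(u)}$ is bounded below by a positive constant whenever $\min(u,1-u)\le 1/4$, and with the trivial boundedness of $(\Phi^{-1})'$ on the remaining compact set $\{\min(u,1-u)\ge 1/4\}$, yields a uniform bound $(\Phi^{-1})'(u)\le c\,[\min(u,1-u)]^{-1}$ on $(0,1)$ for an absolute constant $c$. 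Plugging both facts together with Assumption~\ref{assum:bgc} into the chain-rule identity gives
$$\abs{(D_v f)(\bm u)}\le B\prod_{i=1}^d[1-\Phi(\abs{\Phi^{-1}(u_i)})]^{-B_i}\prod_{i\in v}(\Phi^{-1})'(u_i)\le Bc^{d}\prod_{i=1}^d[\min(u_i,1-u_i)]^{-B_i-1\{i\in v\}},$$
which is exactly \eqref{eq:grow} with the same $B_i\in(0,1)$ and finite constant $Bc^d$. Theorem~\ref{thm:mse} then gives RMSE $=O(N^{-1+\max_i B_i+\epsilon})$ for arbitrarily small $\epsilon>0$; since Assumption~\ref{assum:bgc} lets us take the $B_i$ arbitrarily small (enlarging $B$, hence the implied constant, as needed), for any prescribed $\epsilon'>0$ we choose the $B_i$ and $\epsilon$ with $\max_i B_i+\epsilon<\epsilon'$ and conclude $\sqrt{\mathbb{E}[(\hat I_N(G)-I(G))^2]}=O(N^{-1+\epsilon'})$.

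I expect the only genuine work to be the uniform derivative bound $(\Phi^{-1})'(u)\le c[\min(u,1-u)]^{-1}$: one must patch the Mills-ratio estimate (sharp as $u\to 0,1$) with elementary bounds on a neighbourhood of $u=1/2$, and track the multiplicative constant so that the product over $i\in v$ contributes only the exponent $1\{i\in v\}$ plus the benign factor $c^{d}$. Beyond that, one should note that $f$ is smooth enough for the displayed chain rule because $\Phi^{-1}\in C^\infty(0,1)$ and $G$ is assumed to possess all the mixed derivatives $D_vG$ featuring in \eqref{eq:dgup}, and that the change of variables is justified by the integrability of $f$ implied by \eqref{eq:dgup} with $v=\emptyset$.
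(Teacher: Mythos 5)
Your proposal is correct and follows essentially the same route as the paper's proof: transfer the problem to the unit cube via $f=G\circ\Phi^{-1}$, use the coordinatewise chain rule, bound $(\Phi^{-1})'(u)$ by $c[\min(u,1-u)]^{-1}$, and invoke Theorem~\ref{thm:mse} with arbitrarily small $B_i$. The only (immaterial) difference is that you obtain the derivative bound from the Mills-ratio inequality plus a compactness argument near $u=1/2$, while the paper uses the elementary tail bound $\Phi(t)\le e^{-t^2/2}$ for $t\le 0$; you also make explicit the identity $1-\Phi(\abs{\Phi^{-1}(u)})=\min(u,1-u)$, which the paper leaves implicit.
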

	
\begin{proof}
Let $\bm z=\Phi^{-1}(\bm u)$ and $f(\bm u)=G(\Phi^{-1}(\bm u))$. For any $v\subset 1{:}d$, by Faa di Bruno formula \cite{cons:1996} we have
		$$(D_v f)(\bm u)= (D_v G)(\bm z)\prod_{i\in v}\frac{d\Phi^{-1}(u_i)}{d u_i}.$$
		By inverse function theorem, we have
		$$\frac{\partial \Phi^{-1}(u_i)}{\partial  u_i}= \frac{1}{\rho(\Phi^{-1}(u_i))}= \sqrt{2\pi}\exp((\Phi^{-1}(u_i))^2/2),
		$$
		where $$\rho(t)=\Phi^{'} (t)=\frac{1}{\sqrt{2\pi}}\exp(-t^2/2).$$
		Note the fact that $\Phi(t) \leq \exp(-t^2/2)$ for all $t \leq 0 $, then for all $0 < u_i \leq 1/2$,
		$$\exp[(\Phi^{-1}(u_i))^2/2] \leq \frac{1}{\Phi(\Phi^{-1}(u_i))}=\frac{1}{u_i}.$$ 
		Thus 
		$$\frac{\partial \Phi^{-1}(u_i)}{\partial  u_i} \leq \frac{\sqrt{2\pi}}{u_i}$$ for $u_i \in (0,1/2]$. Similarly for $u_i \in (1/2,1)$, we have 
		$$\exp[(\Phi^{-1}(u_i))^2/2] = \exp[(\Phi^{-1}(1-u_i))^2/2] \leq \frac{1}{1-u_i},$$ leading to 
		$$\frac{\partial \Phi^{-1}(u_i)}{\partial  u_i} \leq \frac{\sqrt{2\pi}}{1-u_i}$$ for $u_i \in (1/2,1)$. Therefore, we claim that 
		\begin{equation}
		\frac{\partial \Phi^{-1}(u_i)}{\partial u_i}=O([\min(u_i,1-u_i)]^{-1}).\label{eq:derphi}
		\end{equation}
		As a result, by \eqref{eq:dgup}, for arbitrarily small $B_i>0$, we have
		$$\abs{(D_v f)(\bm u)}=O\left(\prod_{i=1}^d[\min(u_i,1-u_i)]^{-B_i-1\{i\in v\}}\right),$$
		which verifies the condition \eqref{eq:grow}. 
		Applying Theorem~\ref{thm:mse} with arbitrarily small $B_i$ gives an RMSE of $O(N^{-1+\epsilon})$.
	\end{proof}
	
	\subsection{Non-negative integrands}

	Consider the integral \eqref{eq:target} with 
	$$	G(\bm z) \geq 0,\text{\ for all }\bm z.$$ 
	We perform IS as in \eqref{eq:ISform}. Let $\tilde G (\bm z)=G(\bm \mu+\bm L\bm z)$.  As a result, $G_{\is}(\bm z) = \tilde G (\bm z) W(\bm z)$. By the chain rule, it is easy to see that  
	\begin{equation}\label{eq:times}
		(D_u G_{\is})(\bm z)= \sum_{v\subset u} (D_v \tilde G)(\bm z) (D_{u-v} W)(\bm z).
	\end{equation}
	
	We next provide a sufficient condition for verifying  Assumption~\ref{assum:bgc} for the LR function $W(\bm z)$ given by \eqref{eq:weightf}. 
	
	\begin{theorem}\label{thm:weight}
		The LR function $W(\bm z)$  given by \eqref{eq:weightf} satisfies Assumption~\ref{assum:bgc} if and only if all the eigenvalues of the covariance matrix $\bm L^T\bm \Sigma_0^{-1}\bm L$ are larger than or equal to 1.
	\end{theorem}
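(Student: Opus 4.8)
The plan is to reduce the boundary growth condition for $W$ to a semidefiniteness statement about the quadratic form in its exponent. First I would rewrite \eqref{eq:weightf} in normalized form: setting $\bm M:=\bm L^T\bm\Sigma_0^{-1}\bm L$ and $\bm a:=\bm\mu-\bm\mu_0$ and expanding $(\bm a+\bm L\bm z)^T\bm\Sigma_0^{-1}(\bm a+\bm L\bm z)$, one obtains
\[
W(\bm z)=c\,e^{Q(\bm z)},\qquad
Q(\bm z):=\tfrac12\bm z^T(\bm I_d-\bm M)\bm z-\bm b^T\bm z,
\]
with the positive constant $c=\det(\bm\Sigma)^{1/2}\det(\bm\Sigma_0)^{-1/2}\exp(-\tfrac12\bm a^T\bm\Sigma_0^{-1}\bm a)$ and $\bm b:=\bm L^T\bm\Sigma_0^{-1}\bm a$. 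Since $Q$ is quadratic, a short induction on $\abs{v}$ gives $(D_vW)(\bm z)=P_v(\bm z)\,W(\bm z)$ for every $v\subseteq 1{:}d$, where $P_v$ is a polynomial of degree at most $\abs{v}$ with coefficients built from $\bm I_d-\bm M$ and $\bm b$ (for $v=\emptyset$ this reads $D_\emptyset W=W$). I would also record the elementary two-sided Mills-ratio bound: for each fixed $B_i>0$ there are constants $0<c_i\le\kappa_i<\infty$ with $c_i\,e^{B_iz_i^2/2}\le[1-\Phi(\abs{z_i})]^{-B_i}\le\kappa_i(1+\abs{z_i})^{B_i}e^{B_iz_i^2/2}$ for all $z_i$ (from $\rho(t)/t\ge 1-\Phi(t)\ge t\rho(t)/(1+t^2)$ for $t>0$ together with continuity near $0$, where $\rho=\Phi'$). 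Taking products over the $d$ coordinates, the right-hand side of \eqref{eq:dgup} is, up to polynomial and multiplicative-constant factors, comparable to $\exp(\tfrac12\sum_iB_iz_i^2)$, with the lower estimate carrying no polynomial factor.

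For the ``if'' direction, assume every eigenvalue of $\bm M$ is at least $1$. Then $\bm I_d-\bm M$ is negative semidefinite, so $Q(\bm z)\le\norm{\bm b}\norm{\bm z}$ by Cauchy--Schwarz. Fix any (arbitrarily small) $B_i>0$ and put $\beta:=\min_iB_i>0$; the lower Mills bound gives $\prod_i[1-\Phi(\abs{z_i})]^{-B_i}\ge c'e^{\beta\norm{\bm z}^2/2}$, a genuine quadratic in $\norm{\bm z}$. Since $\abs{(D_vW)(\bm z)}=\abs{P_v(\bm z)}\,c\,e^{Q(\bm z)}\le\abs{P_v(\bm z)}\,c\,e^{\norm{\bm b}\norm{\bm z}}$ and the map $\abs{P_v(\bm z)}\exp(\norm{\bm b}\norm{\bm z}-\tfrac\beta2\norm{\bm z}^2)$ is bounded on $\mathbb{R}^d$, inequality \eqref{eq:dgup} follows, and a finite $B$ is obtained by maximizing over the finitely many sets $v\subseteq 1{:}d$. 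For the ``only if'' direction I argue by contraposition: if $\bm M$ has an eigenvalue strictly below $1$, then $\bm I_d-\bm M$ has an eigenvalue $\lambda>0$ with unit eigenvector $\bm w$. Take $v=\emptyset$ and $B_i:=\lambda/2$ for all $i$. Along the ray $\bm z=t\bm w$ with $t>0$ we have $W(t\bm w)=c\,e^{\lambda t^2/2-t\bm b^T\bm w}$, whereas the upper Mills bound and $\norm{\bm w}=1$ give $\prod_i[1-\Phi(t\abs{w_i})]^{-\lambda/2}\le\kappa(1+t)^{d\lambda/2}e^{\lambda t^2/4}$. The ratio of the former to the latter diverges as $t\to\infty$, so no finite $B$ makes \eqref{eq:dgup} hold and $W$ violates Assumption~\ref{assum:bgc}.

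I expect the main obstacle to be careful bookkeeping rather than any single deep step. One must make the ``up to polynomial factors'' comparison rigorous, in particular near $u_i=1/2$ (that is, $z_i=0$), where $[1-\Phi(\abs{z_i})]^{-B_i}$ merely stays bounded, and one must verify that the polynomials $P_v$ are controlled uniformly over all the sets $v$. Both points are handled by continuity of all the quantities involved on compact sets together with the freedom to take the $B_i$ arbitrarily small, which removes any need to match exponents exactly. I would also note in passing that the eigenvalue condition does not depend on which square root $\bm L$ of $\bm\Sigma$ is chosen, since replacing $\bm L$ by $\bm L\bm Q$ with $\bm Q$ orthogonal only conjugates $\bm M$ and hence preserves its spectrum.
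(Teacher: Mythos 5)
Your proposal is correct and follows essentially the same route as the paper's proof: both write $W=c\,e^{Q}$ with $Q$ quadratic of Hessian $\bm I_d-\bm L^T\bm\Sigma_0^{-1}\bm L$, note that every mixed derivative is a polynomial multiple of $W$, use the Chernoff bound $1-\Phi(t)\le e^{-t^2/2}$ for the ``if'' direction, and for the ``only if'' direction restrict to the ray spanned by an eigenvector of a positive eigenvalue of $\bm I_d-\bm M$ and apply the Mills-ratio lower bound (Gordon's inequality in the paper) to force divergence of the ratio. Your parametrization by the unit eigenvector $\bm w$ with $\|\bm w\|=1$ is marginally cleaner bookkeeping than the paper's $z_i=a_iz_k$ device, but the substance is identical.
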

	
	\begin{proof}
		Let $\bm C=\bm I_d-\bm L^T\bm \Sigma_0^{-1}\bm L$ and  $\bm \alpha=-\bm L^T\bm \Sigma_0^{-1}(\bm \mu-\bm \mu_0)$. By \eqref{eq:weightf}, we have 
		$$W(\bm z)  = \frac{\det (\bm \Sigma)^{1/2}}{\det (\bm \Sigma_0)^{1/2}}\exp\set{-\frac 12(\bm \mu-\bm\mu_0)^T\bm \Sigma_0^{-1}(\bm \mu-\bm\mu_0)}\exp\set{\frac 12 \bm z^T\bm C\bm z+\bm \alpha^T\bm z}.$$
		For any $u\subset 1{:}d$, $D_u W$ is a linear combination of terms of the form
		$$\prod_{i=1}^d z_i^{t_i}\exp\set{\frac 12 \bm z^T\bm C\bm z+\bm \alpha^T\bm z},$$
		where $t_i$ are nonnegative integers. Let $\bm C = \bm Q\bm \Lambda \bm Q^T$ be the singular value decomposition of the matrix $\bm C$, where $\bm Q$ is an orthogonal matrix, and $\bm \Lambda=\diag{\lambda_1,\dots,\lambda_d}$ is a diagonal matrix with $\lambda_1\ge \lambda_2\ge\dots \ge \lambda_d$. Now let $\tilde {\bm z}= \bm Q^T \bm z$, then we have $\exp\{\frac 12 \bm z^T\bm C\bm z\}=\exp\{\frac 12 \tilde {\bm z}^T\bm \Lambda \tilde {\bm z}\}.$ Note that $1-\lambda_i$ are the eigenvalues of  $\bm L^T\bm \Sigma_0^{-1}\bm L = \bm I_d-\bm C$.
		
		If all the eigenvalues of the matrix $\bm L^T\bm \Sigma_0^{-1}\bm L$ are larger than or equal to 1, we have $\lambda_i\le 0$ for all $i=1,\dots,d$, implying $\exp\{\frac 12 \bm z^T\bm C\bm z\}$ is bounded. Note that $$1-\Phi(z) = \Phi(-z) \leq \exp (-z^2/2),$$ for $z \geq 0$. Therefore, for arbitrarily small $B_i>0$, there exists $L>0$ such that
		$$
		\abs{\prod_{i=1}^d z_i^{t_i} \exp\{\bm \alpha^T\bm z\}}\leq L\prod_{i=1}^d \exp(B_iz_i^2/2) \leq L\prod_{i=1}^d (1-\Phi(\abs{z_i}))^{-B_i}.
		$$
		As a result, the function $W(\bm z) $ satisfies Assumption~\ref{assum:bgc}.
		
		Now suppose that the function $W(\bm z) $ satisfies Assumption~\ref{assum:bgc}. Letting $u=\emptyset$ in Assumption~\ref{assum:bgc}, we have 
		\begin{equation}
			\exp\left\lbrace\frac 12 \bm z^T\bm C\bm z+\bm \alpha^T\bm z\right\rbrace =\exp\left\lbrace\frac 12 \tilde {\bm z}^T\bm \Lambda \tilde {\bm z}+\bm \alpha^T\bm Q\tilde {\bm z}\right\rbrace = O\left(\prod_{i=1}^d  (1-\Phi(\abs{z_i}))^{-B_i}\right).\label{eq:terms}
		\end{equation}
		If there exists an eigenvalue of  $\bm L^T\bm \Sigma_0^{-1}\bm L$ which is smaller than 1, then we have $\lambda_1 > 0.$ Now let $ \tilde z_2=\dots=\tilde z_d=0$. Since $\bm z= \bm Q\tilde{\bm z}$ and the first column of $\bm Q$ is not a zero vector, there exist an index $k$ and constants $a_i$ such that $z_i = a_i z_k$. Then 
		$$\exp\left\lbrace\frac 12 \bm z^T\bm C\bm z+\bm \alpha^T\bm z\right\rbrace=\exp\left\lbrace\frac 12 \lambda_1 \tilde{z}_1^2+\bm \alpha^T\bm z\right\rbrace=\exp\{d_1 z_k^2+d_2 z_k\}$$
		for constants $d_1>0,d_2$. Let $d_3=\max_{i} (|a_i|)$ and $B=\sum_{i=1}^d B_i$. Then, taking $z_i = a_i z_k$, we find that
		\begin{align}
			\exp\left\lbrace\frac 12 \bm z^T\bm C\bm z+\bm \alpha^T\bm z\right\rbrace \prod_{i=1}^d (1-\Phi(\abs{z_i}))^{B_i}&=\exp\{d_1 z_k^2+d_2 z_k\}\prod_{i=1}^d (1-\Phi(\abs{a_iz_k}))^{B_i}\notag\\&\ge \exp\{d_1 z_k^2+d_2 z_k\}(1-\Phi(d_3\abs{z_k}))^{B}\notag
			\\&\ge \frac{\exp\{(d_1-Bd_3^2/2)z_k^2+d_2 z_k\}}{(2\pi)^{B/2} (d_3\abs{z_k} + 1/(d_3\abs{z_k}))^B},\label{eq:last}
		\end{align}
		where the last inequality follows from the inequality $1-\Phi(x) > \Phi' (x)/(x + 1/x)$ for $x > 0$ (see \cite{gord:1941}). If
		$B<2d_1/d_3^2$, then the right hand side of \eqref{eq:last}  goes to infinity as $\abs{z_k}\to \infty$. So \eqref{eq:terms} does not hold for the cases $z_i = a_i z_k$ with arbitrarily small $B_i>0$, leading to a contradiction. It follows that all eigenvalues of $\bm L^T\bm \Sigma_0^{-1}\bm L$ are not smaller than 1.
	\end{proof}
	
	\begin{theorem}\label{thm:convergencerate}
		If all the eigenvalues of the matrix $\bm L^T\bm \Sigma_0^{-1}\bm L$ are not smaller than 1 and $\tilde G (\bm z)=G(\bm \mu+\bm L\bm z)$ satisfies Assumption~\ref{assum:bgc}, then  for arbitrarily small $\epsilon>0$, $$\sqrt{\mathbb{E}[(\hat{I}_N(G_{\is})-I(G_{\is}))^2]}=O(N^{-1+\epsilon}).$$
	\end{theorem}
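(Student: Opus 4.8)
The plan is to verify that the importance-sampling integrand $G_{\is} = \tilde G \cdot W$ itself satisfies the QMC-friendly boundary growth condition of Assumption~\ref{assum:bgc}, and then to invoke Theorem~\ref{thm:convergence} with $G$ replaced by $G_{\is}$; this is legitimate because $I(G_{\is})$ is exactly the target integral $C$ and $\hat I_N(G_{\is})$ has precisely the form \eqref{eq:crudeQMC} to which Theorem~\ref{thm:convergence} applies.

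First I would invoke the hypothesis on the spectrum of $\bm L^T\bm \Sigma_0^{-1}\bm L$: since all its eigenvalues are at least $1$, Theorem~\ref{thm:weight} tells us that $W$ satisfies Assumption~\ref{assum:bgc}. Concretely, for any prescribed arbitrarily small $B_i'' > 0$ there is a constant $B'' < \infty$ with $\abs{(D_w W)(\bm z)} \le B'' \prod_{i=1}^d [1-\Phi(\abs{z_i})]^{-B_i''}$ for all $w \subset 1{:}d$. By hypothesis $\tilde G$ likewise admits, for any prescribed arbitrarily small $B_i' > 0$, a constant $B' < \infty$ with the analogous bound on $\abs{(D_v \tilde G)(\bm z)}$. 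Both functions are smooth enough for the mixed partials to exist: $W$ is $C^\infty$ by \eqref{eq:weightf}, and the relevant derivatives of $\tilde G$ exist by Assumption~\ref{assum:bgc}.

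The core step is the product estimate. By the Leibniz-type identity \eqref{eq:times}, $\abs{(D_u G_{\is})(\bm z)} \le \sum_{v \subset u} \abs{(D_v \tilde G)(\bm z)}\,\abs{(D_{u-v} W)(\bm z)}$. Substituting the two bounds above, each summand is at most $B' B'' \prod_{i=1}^d [1-\Phi(\abs{z_i})]^{-(B_i' + B_i'')}$; summing the at most $2^{\abs{u}} \le 2^d$ terms gives $\abs{(D_u G_{\is})(\bm z)} \le 2^d B' B'' \prod_{i=1}^d [1-\Phi(\abs{z_i})]^{-(B_i' + B_i'')}$. Now, given an arbitrarily small target exponent $B_i > 0$, I split it as $B_i' = B_i'' = B_i/2$; the displayed bound then has exactly the shape \eqref{eq:dgup} demanded by Assumption~\ref{assum:bgc}, with constant $B = 2^d B' B''$. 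Hence $G_{\is}$ satisfies Assumption~\ref{assum:bgc}.

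It then remains only to apply Theorem~\ref{thm:convergence} to $G_{\is}$, which yields $\sqrt{\mathbb{E}[(\hat I_N(G_{\is}) - I(G_{\is}))^2]} = O(N^{-1+\epsilon})$ for arbitrarily small $\epsilon > 0$, as claimed. The one place that needs genuine care — and the only place where the argument could fail if Assumption~\ref{assum:bgc} were phrased with a fixed exponent rather than with the ``for arbitrarily small $B_i$'' quantifier — is the closure of the growth class under multiplication; the quantifier is precisely what makes that closure work, since under the product rule the exponents add and one needs room to absorb the doubling.
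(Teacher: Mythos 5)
Your proposal is correct and follows essentially the same route as the paper: invoke Theorem~\ref{thm:weight} to get the boundary growth condition for $W$, use the product rule \eqref{eq:times} to transfer it to $G_{\is}$, and conclude via Theorem~\ref{thm:convergence}. The only difference is that you spell out the exponent-splitting $B_i = B_i' + B_i''$ that makes the growth class closed under multiplication, a detail the paper leaves implicit; this is a welcome clarification rather than a deviation.
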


	\begin{proof}
		By Theorem~\ref{thm:weight},  $W(\bm z) $ satisfies Assumption~\ref{assum:bgc}.  It then follows from \eqref{eq:times} that $G_{\is}(\bm z)$ satisfies Assumption~\ref{assum:bgc}. Applying Theorem~\ref{thm:convergence} immediately completes the proof.
	\end{proof}
	\begin{cor}\label{cor:convergencerate}
		Suppose that $\bm \Sigma_0=\bm I_d$. If all the eigenvalues of the matrix $\bm \Sigma$ are not smaller than 1 and $\tilde G (\bm z)=G(\bm \mu+\bm L\bm z)$ satisfies Assumption~\ref{assum:bgc}, then  for arbitrarily small $\epsilon>0$, $$\sqrt{\mathbb{E}[(\hat{I}_N(G_{\is})-I(G_{\is}))^2]}=O(N^{-1+\epsilon}).$$
	\end{cor}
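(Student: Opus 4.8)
The plan is to deduce Corollary~\ref{cor:convergencerate} directly from Theorem~\ref{thm:convergencerate} by checking that, under the specialization $\bm\Sigma_0=\bm I_d$, the eigenvalue hypothesis of the corollary is literally the eigenvalue hypothesis of the theorem. First I would substitute $\bm\Sigma_0=\bm I_d$ into the matrix $\bm L^T\bm\Sigma_0^{-1}\bm L$ appearing in Theorem~\ref{thm:convergencerate}, obtaining $\bm L^T\bm\Sigma_0^{-1}\bm L=\bm L^T\bm L$. So it remains to show that the spectrum of $\bm L^T\bm L$ agrees with that of $\bm\Sigma=\bm L\bm L^T$.

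Next I would invoke the standard linear-algebra fact that $\bm L^T\bm L$ and $\bm L\bm L^T$ share the same nonzero eigenvalues: if $\bm L\bm L^T\bm x=\lambda\bm x$ with $\bm x\neq\bm 0$ and $\lambda\neq 0$, then $\bm y:=\bm L^T\bm x\neq\bm 0$ and $\bm L^T\bm L\bm y=\bm L^T\bm L\bm L^T\bm x=\lambda\bm L^T\bm x=\lambda\bm y$, and symmetrically for the reverse direction. Since $\bm\Sigma=\bm L\bm L^T$ is assumed to have all eigenvalues at least $1$, it is positive definite, hence the chosen square root $\bm L$ is a square invertible matrix; consequently $\bm L^T\bm L$ and $\bm L\bm L^T$ have the same eigenvalues including multiplicities. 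In particular, ``all eigenvalues of $\bm\Sigma$ are not smaller than $1$'' is equivalent to ``all eigenvalues of $\bm L^T\bm L=\bm L^T\bm\Sigma_0^{-1}\bm L$ are not smaller than $1$.''

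With the eigenvalue condition of Theorem~\ref{thm:convergencerate} thereby verified, and with $\tilde G(\bm z)=G(\bm\mu+\bm L\bm z)$ satisfying Assumption~\ref{assum:bgc} by hypothesis, I would simply apply Theorem~\ref{thm:convergencerate} to conclude $\sqrt{\mathbb{E}[(\hat I_N(G_{\is})-I(G_{\is}))^2]}=O(N^{-1+\epsilon})$. There is no real obstacle here; the only point deserving a word of care is the invertibility of $\bm L$ (forced by positive definiteness of $\bm\Sigma$), which is what permits passing freely between the spectra of $\bm L^T\bm L$ and $\bm L\bm L^T$ — and even without it one would retain equality of the nonzero eigenvalues, which already suffices since the relevant condition is a lower bound by $1>0$.
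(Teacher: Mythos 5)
Your proposal is correct and follows the same route as the paper: substitute $\bm\Sigma_0=\bm I_d$ to get $\bm L^T\bm\Sigma_0^{-1}\bm L=\bm L^T\bm L$, note that $\bm L^T\bm L$ and $\bm L\bm L^T=\bm\Sigma$ share the same eigenvalues, and apply Theorem~\ref{thm:convergencerate}. The paper simply asserts the shared-spectrum fact, whereas you spell out the standard argument; otherwise the two proofs are identical.
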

	\begin{proof}		
		If $\bm \Sigma_0=\bm I_d$, then $\bm L^T\bm \Sigma_0^{-1}\bm L=\bm L^T\bm L$ has the same eigenvalues of the  covariance matrix $\bm \Sigma=\bm L\bm L^T$. Applying Theorem~\ref{thm:convergencerate} completes the proof.
	\end{proof}
	
	\begin{rem}
		Theorem~\ref{thm:weight} shows that if the matrix $\bm L^T\bm \Sigma_0^{-1}\bm L$ has an eigenvalue smaller than 1, the LR function $W(\bm z) $ does not satisfy the `QMC-friendly' boundary growth condition. In other words, $W(\bm z)$ grows extraordinary fast when $\bm z$ goes to infinity. For this case, we may not expect a good performance of RQMC for the IS estimator $\hat{I}_N(G_{\is})$. This insight indicates that when we use IS associated with RQMC, picking a proper IS density is crucial. If we take $\bm \Sigma =\bm \Sigma_0$ (as in ODIS), then $\bm L^T \bm \Sigma_0^{-1} \bm L=\bm I_d$, the eigenvalues of $\bm L^T\bm \Sigma_0^{-1}\bm L$ are all ones. So if $\tilde{G}(\bm z)$ satisfies the `QMC-friendly' boundary growth condition, one can have a nearly $O(N^{-1})$ RMSE rate for RQMC integration. From this point of view, ODIS tends to be a safer choice.
	\end{rem}
	
	\begin{lem}\label{orthgonal}
		Let $\bm L_1$ and $\bm L_2$ be two square roots of the covariance matrix $\bm \Sigma$ satisfying $\bm L_1 \bm L_1^T = \bm L_2 \bm L_2^T = \bm \Sigma$. Then $\bm L_1^T \bm \Sigma_0^{-1} \bm L_1$ has the same eigenvalues of $\bm L_2^T \bm \Sigma_0^{-1} \bm L_2$. 
	\end{lem}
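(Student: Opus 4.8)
The plan is to exploit the classical fact that any two square-root factors of a positive definite matrix differ by right multiplication by an orthogonal matrix, after which the desired spectral coincidence is just invariance of eigenvalues under similarity. First I would observe that $\bm\Sigma$ is a nonsingular covariance matrix, so $\det(\bm\Sigma)>0$, and from $\bm L_i\bm L_i^T=\bm\Sigma$ we get $\det(\bm L_i)^2=\det(\bm\Sigma)>0$; hence both $\bm L_1$ and $\bm L_2$ are invertible. Define $\bm Q:=\bm L_1^{-1}\bm L_2$. Then
\[
\bm Q\bm Q^T=\bm L_1^{-1}\bm L_2\bm L_2^T\bm L_1^{-T}=\bm L_1^{-1}\bm\Sigma\bm L_1^{-T}=\bm L_1^{-1}\bm L_1\bm L_1^T\bm L_1^{-T}=\bm I_d,
\]
so $\bm Q$ is orthogonal and $\bm L_2=\bm L_1\bm Q$.

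Next I would substitute $\bm L_2=\bm L_1\bm Q$ into the matrix of interest. Using $\bm Q^T=\bm Q^{-1}$,
\[
\bm L_2^T\bm\Sigma_0^{-1}\bm L_2=\bm Q^T\bm L_1^T\bm\Sigma_0^{-1}\bm L_1\bm Q=\bm Q^{-1}\bigl(\bm L_1^T\bm\Sigma_0^{-1}\bm L_1\bigr)\bm Q,
\]
so $\bm L_2^T\bm\Sigma_0^{-1}\bm L_2$ and $\bm L_1^T\bm\Sigma_0^{-1}\bm L_1$ are similar; they share the same characteristic polynomial and therefore the same eigenvalues with the same multiplicities, which is the assertion.

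The computation is routine, and the only point that really needs care — hence the mild ``main obstacle'' — is producing the orthogonal $\bm Q$ with $\bm L_2=\bm L_1\bm Q$ when one does not assume $\bm\Sigma$ invertible. In that case $\bm L_1$ need not have an inverse, and instead I would argue via the singular value decomposition: writing $\bm L_i=\bm U_i\bm S_i\bm V_i^T$, the identity $\bm L_i\bm L_i^T=\bm\Sigma$ shows $\bm U_i\bm S_i^2\bm U_i^T$ is an eigendecomposition of $\bm\Sigma$, so $\bm S_1^2=\bm S_2^2$ and $\bm S_1=\bm S_2$; the orthogonal matrix $\bm U_1^T\bm U_2$ then commutes with $\bm S_1^2$ (hence with $\bm S_1$), and a short manipulation yields an orthogonal $\bm Q$ with $\bm L_2=\bm L_1\bm Q$, after which the conclusion follows as above. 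Since in this paper $\bm\Sigma$ is always a nondegenerate covariance matrix, the invertible argument suffices and no genuine difficulty arises.
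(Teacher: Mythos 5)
Your proof is correct and follows essentially the same route as the paper: both define $\bm Q=\bm L_1^{-1}\bm L_2$, verify it is orthogonal, and conclude by similarity of $\bm L_2^T\bm\Sigma_0^{-1}\bm L_2=\bm Q^T(\bm L_1^T\bm\Sigma_0^{-1}\bm L_1)\bm Q$. Your added detail on invertibility and the degenerate case is fine but not needed, since $\bm\Sigma$ is positive definite here.
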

	\begin{proof}
		Since both $\bm L_1$ and $\bm L_2$ are square roots of $\bm \Sigma$, $\bm U := \bm L_1^{-1} \bm L_2$ is an orthogonal matrix. Then we have $\bm L_2^T \bm \Sigma_0^{-1} \bm L_2 = \bm U^T \bm L_1^T \bm \Sigma_0^{-1} \bm L_1 \bm U$, which shares the same eigenvalues of $\bm L_1^T \bm \Sigma_0^{-1} \bm L_1$.
	\end{proof}
	
	Lemma~\ref{orthgonal} shows that although the square root of $\bm \Sigma$ is not unique, the eigenvalues of the matrix $\bm L^T\bm \Sigma_0^{-1}\bm L$ do not depend on the choice of $\bm L$. On the other hand, it is clear that the equality \eqref{eq:weightf} also holds  if $G_{\is}(\bm z)$ is replaced by $G_{\is}(\bm U\bm z)$ for an arbitrary orthogonal matrix $\bm U$. It has the same effect as replacing $\bm L$ by $\bm L\bm U$. In the MC setting the variance of $G_{\is}(\bm U\bm z)$ is irrelevant to the choice of the orthogonal matrix $\bm U$. However, the choice of $\bm U$ is crucial for QMC quadrature rules because the orthogonal matrix $\bm U$ has an impact on the effective dimension of the integrand $G_{\is}(\bm U\bm z)$ which is usually served as an indicator of the performance of QMC \cite{caflisch1997valuation}. It is possible to choose a suitable $\bm U$ such that the resulting integrand $G_{\is}(\bm U\bm z)$ has a lower effective dimension even when the nominal dimension $d$ is large. To overcome the impact of high dimensionality, some dimension reduction strategies are proposed to find a good $\bm U$ in the literature \cite{wang2013pricing,xiao:wang:2017}. Interestingly, our finding ensures that applying dimension reduction strategies does not change the RMSE rate $O(N^{-1+\epsilon})$ established in Theorem~\ref{thm:convergencerate}.

	\subsection{Positivization}
	If $G(\bm z)$ has mixed sign, one may use positivization. Owen and Zhou \cite{owen:2000} proposed to use partition of identity. Define a partition of the identity by a set of functions, $v_j,j=1,\dots,r,$ satisfying 
	$$z = \sum_{j=1}^r v_j(z),z\in\mathbb{R}.$$
	Moreover, $v_j$ does not have mixed sign. A smooth partition of identity can be achieved by
	$$\frac z 2\pm \sqrt{\eta+z^2/4},$$
	where $\eta>0$. Let $v_+(z) = \frac z 2+ \sqrt{\eta+z^2/4},\ v_-(z) = -\frac z 2+ \sqrt{\eta+z^2/4}$.
	We thus have 
	$$C = \int v_+(G(\bm z)) p(\bm z;\bm \mu_0,\bm \Sigma_0)d\bm z- \int v_-(G(\bm z))p(\bm z;\bm \mu_0,\bm \Sigma_0)d\bm z.$$
	Note that $v_\pm(G(\bm z))\ge 0$. We use IS for each part. That is,
	\begin{align*}
		C = &\int v_+(G(\bm z)) \frac{p(\bm z;\bm \mu_0,\bm \Sigma_0)}{p(\bm z;\bm \mu_+,\bm \Sigma_+)}p(\bm z;\bm \mu_+,\bm \Sigma_+)d\bm z\\&- \int v_-(G(\bm z)) \frac{p(\bm z;\bm \mu_0,\bm \Sigma_0)}{p(\bm z;\bm \mu_-,\bm \Sigma_-)}p(\bm z;\bm \mu_-,\bm \Sigma_-)d\bm z.
	\end{align*}
	Let 
	\begin{equation*}
		G_{\is}^+(\bm z)=v_+(G(\bm \mu_++\bm L_+\bm z))W(\bm z;\bm \mu_+,\bm L_+)
	\end{equation*}
    and
    \begin{equation*}
		G_{\is}^-(\bm z)=v_-(G(\bm \mu_-+\bm L_-\bm z))W(\bm z;\bm \mu_-,\bm L_-),
	\end{equation*}
	where $W(\bm z;\bm L,\bm \Sigma)$ is given by \eqref{eq:weightf}, $\bm L_+\bm L_+^T=\bm\Sigma_+$ and $\bm L_-\bm L_-^T=\bm\Sigma_-$. As a result, $C=I(G_{\is}^+)-I(G_{\is}^-)$. The corresponding RQMC estimate is given by $\hat I_N(G_{\is}^+-G_{\is}^-)=\hat{I}_N(G_{\is}^+)-\hat{I}_N(G_{\is}^-)$, where the two integrations use common random inputs.
	
	\begin{theorem}
		If all the eigenvalues of the covariance matrix $\bm L^T\bm \Sigma_0^{-1}\bm L$ are not smaller than 1 and $\tilde G (\bm z)=G(\bm \mu+\bm L\bm z)$ satisfies Assumption~\ref{assum:bgc} for $\bm \mu=\bm \mu_{\pm}$ and $\bm L=\bm L_{\pm}$, then $$\sqrt{\mathbb{E}[(\hat I_N(G_{\is}^+-G_{\is}^-)-C)^2]}=O(N^{-1+\epsilon}).$$
	\end{theorem}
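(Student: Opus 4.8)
The plan is to reduce the statement to Theorem~\ref{thm:convergencerate} applied separately to the two non-negative integrands $v_+(G(\bm\mu_++\bm L_+\bm z))$ and $v_-(G(\bm\mu_-+\bm L_-\bm z))$, and then to combine the two resulting error bounds by Minkowski's inequality in $L^2$. The only genuinely new ingredient that has to be supplied is that if $\tilde G(\bm z)=G(\bm\mu+\bm L\bm z)$ satisfies Assumption~\ref{assum:bgc}, then so does the composition $v_\pm\circ\tilde G$; once this is available, the rest is a direct invocation of results already established.

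To verify the composition claim I would first record the elementary properties of the scalar positivization map. Because $\eta>0$, the function $v_+(y)=y/2+\sqrt{\eta+y^2/4}$ (and likewise $v_-$) is $C^\infty$ on $\mathbb{R}$; it grows at most linearly, $|v_+(y)|\le|y|+\sqrt\eta$; and all of its derivatives of order $k\ge1$ are bounded on $\mathbb{R}$ (indeed $|v_+'|\le1$, and since $\sqrt{\eta+y^2/4}\ge|y|/2$ the radical has bounded first derivative and uniformly bounded higher derivatives). Then, by the Faa di Bruno formula \cite{cons:1996}, for any $v\subseteq 1{:}d$ the mixed derivative $(D_v(v_+\circ\tilde G))(\bm z)$ is a finite linear combination of terms $v_+^{(|\pi|)}(\tilde G(\bm z))\prod_{B\in\pi}(D_B\tilde G)(\bm z)$, where $\pi$ ranges over set partitions of $v$. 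When $v=\emptyset$, the linear growth of $v_+$ together with Assumption~\ref{assum:bgc} for $\tilde G$ (using $1-\Phi(|z_i|)\le 1/2$ to absorb additive constants) gives the required bound. When $v\ne\emptyset$, every term contains a factor $v_+^{(|\pi|)}(\tilde G(\bm z))$ with $|\pi|\ge1$, which is bounded, so the term is dominated by a constant times $\prod_{B\in\pi}|(D_B\tilde G)(\bm z)|$; applying Assumption~\ref{assum:bgc} to each $D_B\tilde G$ with exponents chosen arbitrarily small, and noting that the partition has at most $d$ blocks, this product is bounded by $B'\prod_{i=1}^d[1-\Phi(|z_i|)]^{-B_i'}$ with each $B_i'$ a sum of at most $d$ arbitrarily small positive numbers, hence still arbitrarily small. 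Therefore $v_\pm\circ\tilde G$ satisfies Assumption~\ref{assum:bgc}.

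With this in hand, Theorem~\ref{thm:convergencerate} applied with $\tilde G$ replaced by $v_+\circ\tilde G$, $\bm\mu=\bm\mu_+$, $\bm L=\bm L_+$ (invoking the hypothesis that the eigenvalues of $\bm L_+^T\bm\Sigma_0^{-1}\bm L_+$ are not smaller than $1$) yields $\sqrt{\mathbb{E}[(\hat I_N(G_{\is}^+)-I(G_{\is}^+))^2]}=O(N^{-1+\epsilon})$, and the same argument with the minus subscripts gives $\sqrt{\mathbb{E}[(\hat I_N(G_{\is}^-)-I(G_{\is}^-))^2]}=O(N^{-1+\epsilon})$. Since $\hat I_N(G_{\is}^+-G_{\is}^-)-C=(\hat I_N(G_{\is}^+)-I(G_{\is}^+))-(\hat I_N(G_{\is}^-)-I(G_{\is}^-))$ — a relation that holds regardless of the correlation induced by the common random inputs — Minkowski's inequality gives $\sqrt{\mathbb{E}[(\hat I_N(G_{\is}^+-G_{\is}^-)-C)^2]}\le\sqrt{\mathbb{E}[(\hat I_N(G_{\is}^+)-I(G_{\is}^+))^2]}+\sqrt{\mathbb{E}[(\hat I_N(G_{\is}^-)-I(G_{\is}^-))^2]}=O(N^{-1+\epsilon})$, which is the claim. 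I expect the only delicate point to be the Faa di Bruno bookkeeping, specifically the observation that the arbitrarily small boundary-growth exponents remain arbitrarily small after being summed over the at-most-$d$ blocks of a partition; everything else is routine.
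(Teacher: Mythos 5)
Your proposal is correct and follows essentially the same route as the paper: decompose the error into the $+$ and $-$ parts, show via Faa di Bruno that $v_\pm\circ\tilde G$ inherits Assumption~\ref{assum:bgc} (using boundedness of $v_\pm^{(k)}$ for $k\ge1$ and the at-most-linear growth of $v_\pm$ itself), and then invoke Theorem~\ref{thm:convergencerate} for each part. The only cosmetic differences are that you combine the two errors with Minkowski's inequality where the paper uses $(a-b)^2\le 2a^2+2b^2$, and you cite Theorem~\ref{thm:convergencerate} directly where the paper re-traces its ingredients (Theorem~\ref{thm:weight}, the product rule \eqref{eq:times}, and Theorem~\ref{thm:convergence}); these are equivalent.
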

	\begin{proof}
		Note that $\mathbb{E}[(\hat I_N(G_{\is}^+-G_{\is}^-)-C)^2]\le 2\mathbb{E}[(\hat I_N(G_{\is}^+)-I(G_{\is}^+))^2]+2\mathbb{E}[(\hat I_N(G_{\is}^-)-I(G_{\is}^-))^2]$. It suffices to prove that both $\hat I_N(G_{\is}^+)$ and $\hat I_N(G_{\is}^-)$ have an RMSE of $O(N^{-1+\epsilon})$. Let $H(\bm z) = v_+(\tilde G (\bm z))$. For any $u\subset 1{:}d$, we have $D_u H(\bm z)$  is a linear combination of terms of the form $$v_+^{(k)}(\tilde G (\bm z))\prod_{i\in I}D_{w_i}\tilde G (\bm z),$$
		where $k\ge 0$, $w_i\subset u$ and $w_i\cap w_j=\emptyset$ for any $i\neq j$. Note that $v_+^{(k)}(z)$ is bounded for any $k\ge 1$. Since $v_+(z)\le \sqrt{4\eta + z^2}$, $$v_+^{(0)}(\tilde G (\bm z))=v_+(\tilde G (\bm z))\le \sqrt{4\eta + \tilde G (\bm z)^2}.$$ Since $\tilde G (\bm z)$ satisfies  Assumption~\ref{assum:bgc}, $H(\bm z)$ also satisfies  Assumption~\ref{assum:bgc}. By Theorem~\ref{thm:weight},  $W(\bm z;\bm \mu_+,\bm L_+) $ satisfies Assumption~\ref{assum:bgc}. It then follows from \eqref{eq:times} that $G_{\is}^+$ satisfies Assumption~\ref{assum:bgc}. Applying Theorem~\ref{thm:convergence} immediately leads to $$\sqrt{\mathbb{E}[(\hat I_N(G_{\is}^+)-I(G_{\is}^+))^2]}=O(N^{-1+\epsilon}).$$ Similarly, $\hat I_N(G_{\is}^-)$ has an RMSE of $O(N^{-1+\epsilon})$.
	\end{proof}
	
	A good property of the smooth positivization $v_{\pm}(z)$ is that it does not destroy the boundary growth condition. So the RMSE rate $O(N^{-1+\epsilon})$ can be retained after the positivization. This property does not hold if we take the positive part $\max(G(\bm z),0)$ and the negative part $\max(-G(\bm z),0)$ of the function $G(\bm z)$ because the existence of kinks.
	
	\section{Multivariate $t$ distribution as the proposal}\label{sec:mult}
	
	In this section, we take a multivariate $t$ distribution as the proposal of IS for the problem \eqref{eq:target}. The multivariate $t$ distribution with center $\bm\mu\in\mathbb{R}^{d\times1}$, scale (positive definite) matrix $\bm\Sigma\in\mathbb{R}^{d\times d}$ and $\nu>0$ degrees of freedom, denoted by $t_\nu(\bm\mu,\bm\Sigma)$ has a representation
	$$\bm t = \bm \mu + \bm L\bm x,$$
	where  $\bm L\bm L^T = \bm \Sigma$, and
	\begin{equation}\label{eq:standardt}
		\ \bm x:=\psi(\bm z)=\frac{\bm z_{1{:}d}}{\sqrt{z_{d+1}/\nu}},
	\end{equation}
  $\bm z_{1{:}d}\sim N(\bm 0,\bm I_d)$ independently of $z_{d+1}\sim \chi_{\nu}^2$. We should note that $\bm x$ defined by \eqref{eq:standardt} is the standard multivariate $t$. However,  the components of $\bm x$ are not independent. The multivariate $t$ distribution has a density given by
	\begin{equation*}
		q(\bm t;\bm\mu,\bm\Sigma,\nu) = c_{\bm\mu,\bm\Sigma,\nu}(1+(\bm t-\bm \mu)^T\bm\Sigma^{-1}(\bm t-\bm u)/\nu)^{-(\nu+d)/2},
	\end{equation*}
	where 
	$$c_{\bm\mu,\bm\Sigma,\nu} = \frac{\Gamma((\nu+d)/2)}{\abs{\bm \Sigma}^{1/2}(\nu\pi)^{d/2}\Gamma(\nu/2)}.$$
	
	By a change of measure and a change of variables, the integral \eqref{eq:target} can be changed to
	\begin{eqnarray}
		C &:=& \int_{\mathbb{R}^d} G(\bm t) \frac{p(\bm t;\bm \mu_0,\bm \Sigma_0)}{q(\bm t;\bm\mu,\bm\Sigma,\nu)}q(\bm t;\bm\mu,\bm\Sigma,\nu)d\bm t \nonumber\\ 
		&=& \int_{\mathbb{R}^d} G(\bm \mu+\bm L\bm x) \frac{p(\bm \mu+\bm L\bm x;\bm \mu_0,\bm \Sigma_0)}{q(\bm \mu+\bm L\bm x;\bm \mu,\bm \Sigma,\nu)}q(\bm x;\bm 0,\bm I_d,\nu)d\bm x\notag\\
		&=& \int_{\mathbb{R}^d} G(\bm \mu+\bm L\bm x) W(\bm x)q(\bm x;\bm 0,\bm I_d,\nu)d\bm x\notag\\
		&=& \int_{\mathbb{R}^d} \tilde G(\bm x) W(\bm x)q(\bm x;\bm 0,\bm I_d,\nu)d\bm x\notag,
	\end{eqnarray}
	where $\tilde G(\bm x)=G(\bm \mu+\bm L\bm x)$, the LR function
	\begin{align}
	W(\bm x) &= \frac{p(\bm\mu+\bm L\bm x;\bm \mu_0,\bm\Sigma_0)}{q(\bm\mu+\bm L\bm x;\bm\mu,\bm\Sigma,\nu)}\notag\\ &\propto (1+\bm x^T\bm x/\nu)^{(\nu+d)/2}\exp\{-(\bm\mu+\bm L\bm x-\bm\mu_0)^T\bm\Sigma_0^{-1}(\bm\mu+\bm L\bm x-\bm\mu_0)/2\}\notag\\
	&\propto (1+\bm x^T\bm x/\nu)^{(\nu+d)/2}\exp\set{-\frac 12 \bm x^T(\bm L^T\bm \Sigma_0^{-1}\bm L)\bm x+\bm \alpha^T\bm x}\label{weit},
\end{align}
and $\bm \alpha=-\bm L^T\bm \Sigma_0^{-1}(\bm \mu-\bm \mu_0)$. We should note that any mixed partial derivative of $W(\bm x)$ is bounded since $\bm L^T\bm \Sigma_0^{-1}\bm L$ is positive definite.

Now the integral \eqref{eq:target} is transformed into an expectation of $\tilde G(\bm x) W(\bm x)$ with respect to $\bm x\sim t_\nu(\bm 0,\bm I_d)$. By \eqref{eq:standardt}, the integral \eqref{eq:target} can be further transformed into an expectation of 
\begin{equation*}\label{eq:newGis}
G_\is(\bm z):=\tilde G(\psi(\bm z)) W(\psi(\bm z)).
\end{equation*}
Denote $\mathrm{Gam}(\alpha,\theta)$ as the gamma distribution with a shape parameter $\alpha>0$ and a scale parameter $\theta>0$. Obviously, $z_{d+1}\sim \mathrm{Gam}(\nu/2,2)$.
Define the lower incomplete gamma function as
\begin{equation*}
	\gamma_{\alpha}(x) = \int_0^x t^{\alpha-1}e^{-t}d t.\label{eq:ingamma}
\end{equation*}
The $\mathrm{Gam}(\alpha,1)$ has a CDF $\gamma_{\alpha}(x)/\Gamma(\alpha)$ for all $x\ge0$, where $\Gamma(\alpha)=\gamma_{\alpha}(+\infty)$ is the Gamma function.
To simulate $\bm z$ via uniform random variables, we take
\begin{equation}\label{eq:tau}
	\bm z := \tau(\bm u) = (\tau_1(u_1),\dots,\tau_{d+1}(u_{d+1}))^T =(\Phi^{-1}(u_{1{:}d}),2\gamma_{\nu/2}^{-1}(\Gamma(\nu/2)u_{d+1}))^T,
\end{equation}
where $\bm u=(u_1,\dots,u_{d+1})\sim U(0,1)^{d+1}$ and we use the fact that $2\gamma_{\nu/2}^{-1}(\Gamma(\nu/2)u_{d+1})\sim \mathrm{Gam}(\nu/2,2)$.
This yields an RQMC estimator for the integral \eqref{eq:target}
\begin{equation}\label{eq:testmator}
	\hat{I}_N(G_\is) = \frac 1 N \sum_{i=1}^N G_\is(\tau(\bm u_i)),
\end{equation}
where $\bm u_1,\dots,\bm u_N$ are $N$ RQMC points in $(0,1)^{d+1}$.

Let $\mathbb{N}_0$ be the set of nonnegative integers. For $\bm \lambda=(\lambda_1,\dots,\lambda_d)\in \mathbb{N}_0^d$, define $(\partial ^{\bm\lambda} g)(\bm x)$ as the mixed partial derivative of $g(\bm x)$ taken $\lambda_i$ times with respect to $x_i$, and define $\abs{\bm \lambda}=\sum_{i=1}^d\lambda_i$.

\begin{theorem}\label{thm:trmse}
Let $\tilde{G}(\bm x)=G(\bm \mu+\bm L\bm x)$. If 
\begin{equation}\label{eq:newmixderi}
\abs{(\partial ^{\bm\lambda} \tilde G)(\bm x)\big|_{\bm x=\psi(\tau(\bm u))}}=O\left(\prod_{i=1}^{d+1}[\min(u_i,1-u_i)]^{-A_i}\right)
\end{equation}
for some $A_i>0$ and any $\bm\lambda\in\mathbb{N}_0^d$ with $|\bm\lambda|\le d+1$, then the RQMC estimator $\hat{I}_N(G_\is)$ given by \eqref{eq:testmator} has an RMSE of $O(N^{-1+\max_i \tilde{A_i}+\epsilon})$ for arbitrarily small $\epsilon>0$, where  $\tilde{A}_i=A_i,i=1,\dots,d$ and $\tilde{A}_{d+1}=A_{d+1}+(d+3)/\nu$.
\end{theorem}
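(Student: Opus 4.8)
The plan is to deduce Theorem~\ref{thm:trmse} from Theorem~\ref{thm:mse}. Set $f(\bm u):=G_\is(\tau(\bm u))=\tilde G(\bm x(\bm u))\,W(\bm x(\bm u))$ on $(0,1)^{d+1}$, where $\bm x(\bm u):=\psi(\tau(\bm u))$ has components $x_i(\bm u)=\sqrt\nu\,\Phi^{-1}(u_i)\,\tau_{d+1}(u_{d+1})^{-1/2}$ for $i=1,\dots,d$ and $\tau_{d+1}(u)=2\gamma_{\nu/2}^{-1}(\Gamma(\nu/2)u)$; in particular each $x_i$ depends only on $u_i$ and $u_{d+1}$. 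It suffices to verify that $f$ satisfies the growth condition \eqref{eq:grow} with exponents $B_i=\tilde A_i+\delta$ for arbitrarily small $\delta>0$ (assuming these lie in $(0,1)$, as Theorem~\ref{thm:mse} requires); the bound $O(N^{-1+\max_i\tilde A_i+\delta+\epsilon})$ will then follow, and letting $\delta\downarrow0$ gives the claim.

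First I would expand $(D_vf)(\bm u)$, $v\subseteq1{:}(d+1)$, by the multivariate Faà di Bruno formula: it is a finite linear combination of terms $\bigl(\partial^{\bm\lambda}(\tilde G W)\bigr)(\bm x(\bm u))\prod_{B\in\pi}(D_Bx_{i_B})(\bm u)$ over set partitions $\pi$ of $v$ and choices $i_B\in1{:}d$, with $\bm\lambda=\sum_B e_{i_B}$ and $|\bm\lambda|=|\pi|\le|v|\le d+1$. Since $x_i$ is constant in $u_j$ for $j\notin\{i,d+1\}$, a nonvanishing term forces each block $B$ into $\{i_B,d+1\}$, so every block avoiding $d+1$ is a singleton and at most one block contains $d+1$. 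A Leibniz expansion further reduces $\partial^{\bm\lambda}(\tilde G W)$ to sums of $(\partial^{\bm\alpha}\tilde G)(\partial^{\bm\beta}W)$ with $|\bm\alpha|,|\bm\beta|\le d+1$. Then I bound the pieces: (i) because $\bm L^T\bm\Sigma_0^{-1}\bm L$ is positive definite, the Gaussian factor in \eqref{weit} dominates the polynomial factor, so $W$ and all its derivatives are bounded on $\mathbb{R}^d$, hence $(\partial^{\bm\beta}W)(\bm x(\bm u))=O(1)$; (ii) by hypothesis \eqref{eq:newmixderi}, $|(\partial^{\bm\alpha}\tilde G)(\bm x(\bm u))|=O\bigl(\prod_{i=1}^{d+1}[\min(u_i,1-u_i)]^{-A_i}\bigr)$ for all $|\bm\alpha|\le d+1$; (iii) from \eqref{eq:derphi}, $|(\Phi^{-1})'(u_i)|=O([\min(u_i,1-u_i)]^{-1})$ and $|\Phi^{-1}(u_i)|=O([\min(u_i,1-u_i)]^{-\delta})$, while the asymptotics $\gamma_\alpha(x)\sim x^\alpha/\alpha$ $(x\to0^+)$ and $\Gamma(\alpha)-\gamma_\alpha(x)\sim x^{\alpha-1}e^{-x}$ $(x\to\infty)$ with $\alpha=\nu/2$, together with the inverse-function theorem, yield $\tau_{d+1}(u)^{-1/2}=O([\min(u,1-u)]^{-1/\nu})$ and $\tau_{d+1}(u)^{-3/2}|\tau_{d+1}'(u)|=O([\min(u,1-u)]^{-3/\nu-1})$.

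Combining (i)--(iii): in any Faà di Bruno term each singleton block $\{i\}$ produces $|D_{\{i\}}x_i|=O([\min(u_i,1-u_i)]^{-1}[\min(u_{d+1},1-u_{d+1})]^{-1/\nu})$, and the unique block containing $d+1$ produces one factor $O([\min(u_{d+1},1-u_{d+1})]^{-3/\nu-1})$ (plus a harmless $[\min(u_k,1-u_k)]^{-\delta}$ in one further coordinate). Since there are at most $|v\cap1{:}d|\le d$ singleton blocks, the exponent of $[\min(u_i,1-u_i)]^{-1}$ is at most $A_i+\delta+1\{i\in v\}$ for $i\le d$, while that of $[\min(u_{d+1},1-u_{d+1})]^{-1}$ is at most $A_{d+1}+|v\cap1{:}d|/\nu+1\{d+1\in v\}(3/\nu+1)\le A_{d+1}+(d+3)/\nu+1\{d+1\in v\}$, where the bound $[\min(u,1-u)]^{-a}\le[\min(u,1-u)]^{-a'}$ for $a\le a'$ covers the cases $d+1\notin v$ and $v=\emptyset$. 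This is exactly \eqref{eq:grow} with $B_i=\tilde A_i+\delta$, so Theorem~\ref{thm:mse} applies and finishes the proof upon $\delta\downarrow0$.

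The main obstacle is step (iii): controlling the inverse lower incomplete gamma function $\gamma_{\nu/2}^{-1}$ and its first derivative near both endpoints $u=0$ (power-law behaviour $u^{2/\nu}$) and $u=1$ (logarithmic behaviour), which is precisely what inflates the last exponent to $A_{d+1}+(d+3)/\nu$. The other point requiring care is the combinatorics: one must check that any Faà di Bruno term carries at most $|v\cap1{:}d|$ factors of $\tau_{d+1}^{-1/2}$ and at most one factor of $\tau_{d+1}^{-3/2}\tau_{d+1}'$, so that $v=1{:}(d+1)$ is indeed the worst case.
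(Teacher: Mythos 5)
Your proposal is correct and follows essentially the same route as the paper's proof: reduce to Theorem~\ref{thm:mse} by verifying \eqref{eq:grow} for $f=G_\is\circ\tau$ via Fa\`a di Bruno, using boundedness of all mixed partials of $W$, the bound \eqref{eq:derphi}, and the small-$u$/large-$u$ asymptotics of $\gamma_{\nu/2}^{-1}$ to obtain exactly the exponents $\tilde A_i$ (the only cosmetic difference is that you apply Fa\`a di Bruno once to the full composition $\psi\circ\tau$, whereas the paper first differentiates $G_\is=g\circ\psi$ in $\bm z$ and then chains through the componentwise map $\tau$; the bookkeeping is identical). Your parenthetical caveat that the $\tilde A_i$ must lie in $(0,1)$ for Theorem~\ref{thm:mse} to apply is a point the paper glosses over as well.
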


\begin{proof}
Let $g(\bm x)=\tilde{G}(\bm x)W(\bm x)$. So $G_\is(\bm z)=g (\psi(\bm z))$, where $\bm x=\psi(\bm z)$ with $$\psi(\bm z)=(\psi_1(\bm z),\dots,\psi_d(\bm z))^\top:\mathbb{R}^{d+1}\mapsto \mathbb{R}^{d}$$ given by \eqref{eq:standardt}. Using the Faa di Bruno formula for mixed partial derivatives taken at most once with respect to every index (see Equation (10) of \cite{basu:2016}), for any $v\subset 1{:}(d+1)$,  one gets 
\begin{equation}\label{eq:DvGIS}
(D_v G_\is)(\bm z)=\sum_{\bm\lambda \in \mathbb{N}_0^d,1\le|\bm\lambda|\le|v|}(\partial^{\bm\lambda} g)(\bm x)\sum_{s=1}^{\abs{v}}\sum_{(\ell_r,k_r)\in \mathrm{KL}(s,v,\bm\lambda)}\prod_{r=1}^sD_{\ell_r}\psi_{k_r}(\bm z),
\end{equation}
where 
\begin{align*}
	\mathrm{KL}(s,v,\bm\lambda) &= \{(\ell_r,k_r), r\in 1{:}s| \ell_r  \subset 1{:}(d+1),\ k_r\in 1{:}d,\ \cup_{r=1}^s\ell_r=v,\\&\ell_r\neq \emptyset,\ell_r\cap\ell_{r'}=\emptyset \text{ for } r\neq r'\text{ and } |\{j\in 1{:}s|k_j=i\}|=\lambda_i \}.
\end{align*}

Since any mixed partial derivative of $W(\bm x)$ is bounded, by \eqref{eq:newmixderi}, we have
$$|(\partial^{\bm\lambda} g)(\bm x)|=|(\partial^{\bm\lambda} g)(\psi(\tau(\bm u)))|=O\left(\prod_{i=1}^{d+1}[\min(u_i,1-u_i)]^{-A_i}\right).$$
We next bound $\prod_{r=1}^sD_{\ell_r}\psi_{k_r}(\bm z)$. Note that for $j=1,\dots,d$,
$$\psi_{j}(\bm z)=\frac{z_j}{\sqrt{z_{d+1}/
\nu}}.$$
For any nonempty $u\subset 1{:}(d+1)$, we have
\begin{equation*}
D_u\psi_{j}(\bm z)= 
\begin{cases}
\sqrt{\nu}z_{d+1}^{-1/2},	& u = \{j\},\\
(-1/2)\sqrt{\nu}z_jz_{d+1}^{-3/2},	& u = \{d+1\},\\
(-1/2)\sqrt{\nu}z_{d+1}^{-3/2},	& u = \{j,d+1\},\\
0,	& \text{else}.
	\end{cases}
\end{equation*}

By \eqref{eq:tau}, we have
$$
z_{d+1} = 2\gamma_{\nu/2}^{-1}(\Gamma(\nu/2)u_{d+1}).
$$
By L'Hospital's rule, we find that $$\lim_{x\to 0}\frac{\gamma_{\alpha}(x)}{x^\alpha}=\lim_{x\to 0}\frac{\gamma'_{\alpha}(x)}{\alpha x^{\alpha-1}}=1/\alpha.$$ 
Then
$$\lim_{u\to 0}\frac{\gamma_{\alpha}(\gamma_{\alpha}^{-1}(\Gamma(\alpha)u))}{\gamma_{\alpha}^{-1}(\Gamma(\alpha)u)^\alpha}=\lim_{u\to 0}\frac{\Gamma(\alpha)u}{\gamma_{\alpha}^{-1}(\Gamma(\alpha)u)^\alpha}=\frac 1\alpha,$$
implying $1/\gamma_{\alpha}^{-1}(\Gamma(\alpha)u)=O(u^{-1/\alpha})$.
Then for any $\beta\ge 0$,
\begin{equation}
z_{d+1}^{-\beta}=2^{-\beta}(\gamma_{\nu/2}^{-1}(\Gamma(\nu/2)u_{d+1}))^{-\beta} =O(u_{d+1}^{-2\beta/\nu})=O([\min(u_{d+1},1-u_{d+1})]^{-2\beta/\nu}).\label{eq:zdbeta}
\end{equation}
Also, we have $|z_j|=|\Phi^{-1}(u_j)|=O([\min(u_j,1-u_j)]^{-B_j})$ for arbitrarily small $B_j>0$ and $j=1,\dots,d$.
As a result, 
\begin{align*}
\abs{\prod_{r=1}^sD_{\ell_r}\psi_{k_r}(\bm z)}&\le \prod_{r=1}^s \left(\sqrt{v} z_{d+1}^{-1/2-1\{d+1\in \ell_r\}} |z_{k_r}|^{1\{k_r\notin \ell_r\}}\right)\\
&=O\left(\prod_{j=1}^d[\min(u_j,1-u_j)]^{-B_j}[\min(u_{d+1},1-u_{d+1})]^{-(d+3)/\nu}\right).
\end{align*}
Using \eqref{eq:DvGIS} gives
\begin{equation}\label{eq:dvG}
\abs{D_v G_\is(\bm z)}=O\left(\prod_{j=1}^d[\min(u_j,1-u_j)]^{-B_j}[\min(u_{d+1},1-u_{d+1})]^{-(d+3)/\nu}\right).
\end{equation}

Let $f(\bm u)=G_\is(\tau(\bm u))$. Then for any $v\subset 1{:}(d+1)$, we have
\begin{equation}\label{eq:zd1}
	(D_v f)(\bm u)=(D_v G_\is)(\bm z)\prod_{i\in v}\frac{d\tau_i(u_i)}{du_i}.
\end{equation}
By \eqref{eq:derphi}, we have $$\abs{\frac{d\tau_{j}(u_{j})}{du_{j}}}=\abs{\frac{d\Phi^{-1}(u_{j})}{du_{j}}}=O([\min(u_j,1-u_j)]^{-1}),\ j=1,\dots,d.$$
On the other hand, 
\begin{align*}
	\frac{d\tau_{d+1}(u_{d+1})}{du_{d+1}}&=\frac{2\Gamma(\nu/2)}{\gamma_{\nu/2}'(\gamma_{\nu/2}^{-1}(\Gamma(\nu/2)u_{d+1}))}\\&=\frac{2\Gamma(\nu/2)}{\gamma_{\nu/2}'(z_{d+1}/2)}\\&=2\Gamma(\nu/2)(z_{d+1}/2)^{1-\nu/2}e^{z_{d+1}/2}.
\end{align*}
Recall that $z_{d+1}=\tau_{d+1}(u_{d+1})$. By \eqref{eq:zdbeta}, as $u_{d+1}\to0$, $$\abs{\frac{d\tau_{d+1}(u_{d+1})}{du_{d+1}}}=O(u_{d+1}^{-1+2/\nu})=O(u_{d+1}^{-1}).$$
By L'Hospital's rule, it is easy to see that  $$\lim_{x\to\infty}\frac{\Gamma(\alpha)-\gamma_{\alpha}(x)}{\gamma'_{\alpha}(x)}=\lim_{x\to\infty}\frac{-\gamma_{\alpha}'(x)}{\gamma^{''}_{\alpha}(x)}=1.$$ It then follows
$$\lim_{u\to1}\frac{\Gamma(\alpha)-\gamma_{\alpha}(\gamma^{-1}_{\alpha}(\Gamma(\alpha)u))}{\gamma'_{\alpha}(\gamma^{-1}_{\alpha}(\Gamma(\alpha)u))}=\lim_{u\to1}\frac{\Gamma(\alpha)(1-u)}{\gamma'_{\alpha}(\gamma^{-1}_{\alpha}(\Gamma(\alpha)u))}=1.$$
As $u_{d+1}\to 1$, we have
$$
\abs{\frac{d\tau_{d+1}(u_{d+1})}{du_{d+1}}}=\frac{2\Gamma(\nu/2)}{\gamma_{\nu/2}'(\gamma_{\nu/2}^{-1}(\Gamma(\nu/2)u_{d+1}))}=O((1-u_{d+1})^{-1}).
$$
Therefore,
$$
\abs{\frac{d\tau_{d+1}(u_{d+1})}{du_{d+1}}}=O([\min(u_{d+1},1-u_{d+1})]^{-1}).
$$
By \eqref{eq:dvG} and \eqref{eq:zd1}, we have
$$|(D_v f)(\bm u)|=O\left(\prod_{i=1}^{d+1} [\min(u_i,1-u_i)]^{-\tilde B_i-1\{i\in v\}}\right),$$
where $\tilde B_j=A_j+B_j,j=1,\dots,d$ and $B_{d+1}= A_{d+1}+(d+3)/\nu$. Since $B_j$ are arbitrarily small, applying Theorem~\ref{thm:mse} with $\tilde B_j$ completes the proof.
\end{proof}

Differently from Theorem~\ref{thm:convergencerate}, Theorem~\ref{thm:trmse} holds for any positive definite matrix $\bm \Sigma_0$ and any multivariate $t$ distribution, including $t_{\nu}(\bm \mu_\star,\bm I_d)$ and $t_{\nu}(\bm \mu_\star,\bm \Sigma_\star)$ which result from the ODIS and LapIS, respectively. This is because any mixed partial derivative of the LR function \eqref{weit} is bounded.
However, the rate established in Theorem~\ref{thm:trmse} is conservative. It
favors large degree of freedom $\nu$, especially $\nu\gg d$. Indeed, as $\nu\to\infty$, the rate reduces to the rate established in Theorem~\ref{thm:convergencerate} if $A_i$ are arbitrarily small. In our numerical experiments, we look at the performance of RQMC for a small $\nu$. A tight RMSE upper bound for multivariate $t$ distributions is left for future research.

\section{Examples}\label{sec:exam}
	
\subsection{Rendleman-Bartter Model}
	
	Caflisch \cite{Caflisch1998} considered the problem of valuing a discount bond, where the interest rates were assumed to follow the Rendleman-Bartter model \cite{Hull2011}. Here we consider the fair price of a one-year zero coupon bond with a face value of \$$1$, which can be represented as a Gaussian integral
	$$C = \int_{\mathbb{R}} G(z)p(z;0,1)dz,$$	
	where 
	\begin{equation}
		\label{eq:defGinex1}
		G(z) = \frac{1}{1+r_1(z)}
	\end{equation}
	with $r_1(z)=r_0\exp(-\sigma^2/2+\sigma z)=:c\exp(\sigma z)$ and
	$c=r_0\exp(-\sigma^2/2)$, $r_0$ and $r_1$ denote interest rates at time
	$t=0,1$, respectively, and $\sigma$ denotes the volatility. Consider the dimension $d=1$ for this example. Let
	$$F(z) = \log G(z) = -\log(1+r_1(z)).$$
	
	In the following we focus on ODIS and LapIS. We first find $\mu_{\star}$ by solving
	$F^{'}(\mu_{\star}) = \mu_{\star}$ or equivalently
	\begin{equation}
		\label{eq:solution of mu}
		-\frac{\sigma r_1(\mu_{\star})}{1+r_1(\mu_{\star})}=\mu_{\star},
	\end{equation}
	and then find the  variance $\Sigma_{\star}$ for LapIS  given by
	\begin{equation}\label{eq:solution of sigma}
		\Sigma_{\star} = (1-F^{''}(\mu_{\star}))^{-1},
	\end{equation}
	where
	\begin{equation}\label{eq:form of f''}
		F^{''}(\mu_{\star}) = -\frac{\sigma^2 r_1(\mu_{\star})}{(1+r_1(\mu_{\star}))^2}.
	\end{equation}
	\begin{lem}
		\label{lem:ex1}
		The optimal drift $\mu_{\star}$ and the variance $\Sigma_{\star}$ have the following bounds:
		$$
			-\sigma < \mu_{\star} < 0,
		$$
		$$
			0<\Sigma_{\star} < 1.
		$$
	\end{lem}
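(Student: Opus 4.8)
The plan is to treat the two bounds separately, establishing the location of $\mu_\star$ first and then reading off the bound on $\Sigma_\star$ as an immediate consequence.

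For the bound on $\mu_\star$, I would introduce the auxiliary function $\phi(z):=F'(z)-z=-\sigma r_1(z)/(1+r_1(z))-z$, whose zeros are exactly the solutions of \eqref{eq:solution of mu}. Since $c=r_0\exp(-\sigma^2/2)>0$, we have $r_1(z)>0$ for every $z$, so the ratio $r_1(z)/(1+r_1(z))$ always lies strictly in $(0,1)$; hence $F'(z)\in(-\sigma,0)$ for all $z\in\mathbb{R}$. Evaluating $\phi$ at the endpoints then gives $\phi(0)=F'(0)<0$ and $\phi(-\sigma)=F'(-\sigma)+\sigma>0$, so by the intermediate value theorem $\phi$ vanishes at some point strictly inside $(-\sigma,0)$. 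To see this zero is the \emph{unique} solution $\mu_\star$ (so that ``the optimal drift'' is unambiguous), I would differentiate: $\phi'(z)=F''(z)-1$, and since $F''(z)=-\sigma^2 r_1(z)/(1+r_1(z))^2<0$ by \eqref{eq:form of f''}, we get $\phi'(z)<-1<0$. Thus $\phi$ is strictly decreasing, has exactly one zero, and that zero lies in $(-\sigma,0)$.

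For $\Sigma_\star$, I would simply substitute the sign information just obtained into \eqref{eq:solution of sigma}. Because $r_1(\mu_\star)>0$, the formula \eqref{eq:form of f''} gives $F''(\mu_\star)<0$ strictly, hence $1-F''(\mu_\star)>1$, and therefore $\Sigma_\star=(1-F''(\mu_\star))^{-1}\in(0,1)$. The computations are entirely routine; the only point that needs a little care is the well-definedness of $\mu_\star$, which the strict monotonicity of $\phi$ supplies, together with bookkeeping of strict versus non-strict inequalities at the endpoints $0$ and $-\sigma$. I anticipate no genuine obstacle beyond this.
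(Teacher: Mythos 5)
Your proof is correct and follows essentially the same route as the paper: the bound on $\mu_{\star}$ comes from the observation that $F'(z)=-\sigma r_1(z)/(1+r_1(z))\in(-\sigma,0)$, and the bound on $\Sigma_{\star}$ from $F''(\mu_{\star})<0$ in \eqref{eq:solution of sigma}. The only addition is your existence/uniqueness argument for $\mu_{\star}$ via strict monotonicity of $F'(z)-z$, which the paper leaves implicit but which is a legitimate (and welcome) bit of extra care.
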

	\begin{proof}
		The bounds of $\mu_{\star}$ follow from \eqref{eq:solution of mu}. By
		\eqref{eq:form of f''} we have $F^{''}(\mu_{\star})<0$. It follows from \eqref{eq:solution of sigma} that $\Sigma_{\star}\in(0,1)$.
	\end{proof}
	
	Letting $L = \sqrt{\Sigma_{\star}}$ and noting $\mu_0=0$ and $\Sigma_0 = 1$, we have $L\Sigma_0^{-1}L=\Sigma_{\star} < 1$. It follows from Theorem~\ref{thm:weight} that $W(z;\mu_{\star},\Sigma_{\star})$ does not satisfy Assumption~\ref{assum:bgc}. Thus LapIS fails to satisfy the condition of Theorem~\ref{thm:convergencerate}. On the other hand, both $G$ and
	$dG/dz$ are bounded since $$|G| \leq 1$$ and
	$$\left\lvert \frac{dG}{dz}\right\rvert=\left\lvert\frac{\sigma}{2+r_1+1/r_1}\right\rvert
	\leq \frac{\sigma}4.$$ It follows from Theorem~\ref{thm:convergencerate} that ODIS
	has an RMSE of $O(N^{-1+\epsilon})$.
	Although the LR function $W(z;\mu_{\star},\Sigma_{\star})$ does not satisfy the boundary growth condition, it is unclear whether the LapIS estimator $G_{\is}(z)$ violates the boundary growth condition. For this example, we are able to work out tail behaviors of $G_{\is}(z)$ for the two IS methods.
	\begin{lem} Suppose $G(z)$ is given by \eqref{eq:defGinex1}. Then for LapIS,
		$$
			G_{\is}(z)=G(z)W(z;\mu_{\star},\Sigma_{\star}) \apprge \exp(\kappa z^2), \quad  \text{as }z \rightarrow \pm\infty
		$$
		for any $\kappa\in (0,(1-\Sigma_{\star})/2)$, implying $\lim_{z\to \pm\infty}G_{\is}(z)=\infty$.
		For ODIS, we have
		$$
			G_{\is}(z)=G(z)W(z;\mu_{\star},1) \apprle \left\{
			\begin{aligned}
				\exp(-\mu_{\star}z), \quad  \text{as }z \rightarrow -\infty, \\
				\exp(-(\mu_{\star}+\sigma)z), \quad  \text{as }z \rightarrow +\infty,
			\end{aligned}
			\right.  	
		$$
		implying $\lim_{z\to \pm\infty}G_{\is}(z)=0$.
	\end{lem}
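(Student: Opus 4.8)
The plan is to make the likelihood ratio $W$ from \eqref{eq:weightf} fully explicit in each of the two cases (using $\mu_0=0$ and $\Sigma_0=1$), and then simply read off the dominant factor in the tails, observing that $G$ only ever contributes a bounded, at most exponentially decaying prefactor that cannot interfere with it.

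For LapIS we take $\mu=\mu_\star$, $\Sigma=\Sigma_\star$ and $L=\sqrt{\Sigma_\star}$, so \eqref{eq:weightf} gives
\[
W(z;\mu_\star,\Sigma_\star)=\sqrt{\Sigma_\star}\,\exp\set{\tfrac12 z^2-\tfrac12(\mu_\star+\sqrt{\Sigma_\star}\,z)^2}=\sqrt{\Sigma_\star}\,\exp\set{\tfrac12(1-\Sigma_\star)z^2-\mu_\star\sqrt{\Sigma_\star}\,z-\tfrac12\mu_\star^2}.
\]
Since $\Sigma_\star\in(0,1)$ by Lemma~\ref{lem:ex1}, the coefficient $(1-\Sigma_\star)/2$ of $z^2$ is strictly positive, so $W$ alone already blows up like $\exp\{(1-\Sigma_\star)z^2/2\}$. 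The factor that $G$ contributes --- whether $G(z)$ as written in the statement or the composition $G(\mu_\star+\sqrt{\Sigma_\star}\,z)$ that $G_{\is}$ actually carries --- stays in $(0,1]$ and decays at worst exponentially (like $e^{-\sigma\sqrt{\Sigma_\star}\,z}$ as $z\to+\infty$, tending to $1$ as $z\to-\infty$), so it cannot cancel this Gaussian-type growth: for any fixed $\kappa\in(0,(1-\Sigma_\star)/2)$ the quotient $G_{\is}(z)/\exp(\kappa z^2)$ equals $\exp\{(\tfrac12(1-\Sigma_\star)-\kappa)z^2+O(z)\}$, which tends to $\infty$. This yields the stated lower bound and $\lim_{z\to\pm\infty}G_{\is}(z)=\infty$.

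For ODIS we instead take $\Sigma=\Sigma_0=1$, $L=1$ and $\mu=\mu_\star$, so the quadratic terms cancel and $W(z;\mu_\star,1)=\exp\{-\mu_\star z-\mu_\star^2/2\}$. Then $G_{\is}(z)=G(\mu_\star+z)\exp\{-\mu_\star z-\mu_\star^2/2\}$, and it remains to splice in the two tails of $G$. As $z\to-\infty$ we have $r_1(\mu_\star+z)=ce^{\sigma\mu_\star}e^{\sigma z}\to0$, hence $G(\mu_\star+z)\to1$ and $G_{\is}(z)$ is a constant times $\exp(-\mu_\star z)$, which tends to $0$ because $\mu_\star<0$ by Lemma~\ref{lem:ex1}. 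As $z\to+\infty$ we have $G(\mu_\star+z)\le(ce^{\sigma\mu_\star})^{-1}e^{-\sigma z}$, hence $G_{\is}(z)$ is bounded by a constant times $\exp(-(\mu_\star+\sigma)z)$, which tends to $0$ because $\mu_\star+\sigma>0$, again by Lemma~\ref{lem:ex1}. This gives the two ODIS estimates and $\lim_{z\to\pm\infty}G_{\is}(z)=0$.

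I do not expect a genuinely hard step here once $W$ is written out: the argument is a direct asymptotic comparison, and the only real care is bookkeeping --- checking that the normalizing constant, the linear cross term $-\mu_\star\sqrt{\Sigma_\star}\,z$, and the polynomial/exponential tail of $G$ are all negligible against $\exp\{(1-\Sigma_\star)z^2/2\}$ in the LapIS case, and that the exponents $-\mu_\star$ and $-(\mu_\star+\sigma)$ carry the correct sign in the ODIS case. The bounds $-\sigma<\mu_\star<0$ and $0<\Sigma_\star<1$ of Lemma~\ref{lem:ex1} are precisely what make these sign checks go through.
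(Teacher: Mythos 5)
Your proof is correct and follows essentially the same route as the paper's: write out $W$ explicitly from \eqref{eq:weightf} with $\mu_0=0$, $\Sigma_0=1$, obtaining the quadratic exponent $\tfrac{1}{2}(1-\Sigma_\star)z^2$ for LapIS (positive by Lemma~\ref{lem:ex1}) and the purely linear exponent $-\mu_\star z$ for ODIS, then compare against the bounded/exponentially decaying factor coming from $G$ and use the sign bounds $-\sigma<\mu_\star<0$. Your extra remark that the conclusion is unaffected by whether the prefactor is $G(z)$ or $G(\mu_\star+\sqrt{\Sigma_\star}\,z)$ is a sensible clarification of the lemma's notation, but does not change the argument.
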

	\begin{proof}
		First consider the LapIS. By \eqref{eq:weightf} and \eqref{eq:defGinex1}, we have
		\begin{align*}
			G_{\is}(z)&=G(z)W(z;\mu_{\star},\Sigma_{\star}) \\&= \frac{\sqrt{\Sigma_{\star}}}{1+c\exp(\sigma z)} \exp\left(\frac{1-\Sigma_{\star}}{2}z^2-\mu_{\star}\sqrt{\Sigma_{\star}}z-\frac{\mu_{\star}^2}2\right)
			\apprge \exp(\kappa z^2)
		\end{align*}
		for any $\kappa\in (0,(1-\Sigma_{\star})/2)$ as $z \rightarrow \pm\infty$.
		
		Now consider the case of ODIS, in which
		\begin{align*}
			G_{\is}(z)&=G(z)W(z;\mu_{\star},1) = \frac{1}{1+c\exp(\sigma z)} \exp\left(-\mu_{\star}z-\frac{\mu_{\star}^2}2\right).
		\end{align*}
		As $z \rightarrow -\infty$, we have $G_{\is}(z)\apprle \exp(-\mu_{\star}z)\to 0$ since $\mu_{\star}<0$ by Lemma \ref{lem:ex1}.
		On the other hand, when $z \rightarrow +\infty$, $G_{\is}(z)\apprle \exp(-(\mu_{\star}+\sigma) z)\to 0$ since $\mu_{\star}+\sigma > 0$ by Lemma~\ref{lem:ex1}.            
	\end{proof}
	
	The lemma above indicates that LapIS may introduce ``severe" singularities along boundaries of the unit cube (when using $z=\Phi^{-1}(u)$), which grow faster than the RHS of \eqref{eq:dgup}. This would make QMC inefficient.

	\subsection{Bayesian Logistic Regression}
	
	In this example, we consider integrals with respect to the posterior distribution under the  Bayesian logistic regression model. Let $\bm{X}=(\bm x_1,\dots,\bm x_n)$ denote the predictors,  $\bm Y=(Y_1,\dots,Y_n)$ denote independent response observations, and $\bm z\in \mathbb{R}^d$ denote the vector of unknown parameters. The predictors are assumed to be bounded and not all zero. Every binary random variable $Y_i \in \{0,1\}$ is related to the predictor $\bm x_i \in \mathbb{R}^d$ by 
	$$
	\mathbb{P}(Y_i=1)=\frac{\exp(\bm x_i^T\bm z)}{1+\exp(\bm x_i^T\bm z)}.
	$$
	The likelihood function is then given by
	$$
		\ell(\bm z) = \prod_{i=1}^n\left(\frac{\exp(\bm x_i^T\bm z)}{1+\exp(\bm x_i^T\bm z)}\right)^{Y_i}\left(\frac{1}{1+\exp(\bm x_i^T\bm z)}\right)^{1-Y_i}.
	$$
	Under the Bayesian framework, we model the parameter $\bm z$ as a random vector, whose prior distribution is assumed to be a standard normal distribution $N(\bm 0,\bm I_d)$. Then the posterior distribution of $\bm z$ given $\bm Y$ is
	$$
		\pi(\bm z) = \frac{1}{c}p(\bm z;\bm 0,\bm I_d)\ell(\bm z),	
	$$
	where the normalizing constant $c$ is rarely known.
	
	Our goal is to compute the posterior expectation $\mathbb{E}_{\pi(\bm z)}[f(\bm z)]$, where $f$ is the test function. For example, if $f(\bm z)=z_i$, the expectation is known as the posterior mean. Since the normalizing constant $c$ is unknown, we may write the posterior expectation as a ratio of two integrals with respect to the prior distribution $p(\bm z;\bm 0,\bm I_d)$ given by
	\begin{equation}\label{eq:posterior expectation}
		\mathbb{E}_{\pi(\bm z)}[f(\bm z)]=\frac{\int_{\mathbb{R}^d}f(\bm z)\ell(\bm z)p(\bm z;\bm 0,\bm I_d)d\bm z}{\int_{\mathbb{R}^d}\ell(\bm z)p(\bm z;\bm 0,\bm I_d)d \bm z}.
	\end{equation}
	We then apply  IS for the two integrals in \eqref{eq:posterior expectation}, such as ODIS and LapIS. The final ratio estimator is given by
	\begin{equation}
		\hat R_N = \frac{\hat{I}_N(G_{\is}^{\mathrm{num}})}{\hat{I}_N(G_{\is}^{\mathrm{den}})}=\frac{\frac 1 N \sum_{i=1}^N  G_{\is}^{\mathrm{num}}(\Phi^{-1}(\bm u_i))}{\frac 1 N \sum_{i=1}^N  G_{\is}^{\mathrm{den}}(\Phi^{-1}(\bm u_i))},\label{eq:ratio}
	\end{equation}
	where $G_{\is}^{\mathrm{den}}(\bm z) = \ell(\bm \mu + \bm L \bm z)W(\bm z)$ and $G_{\is}^{\mathrm{num}}(\bm z)=f(\bm \mu + \bm L \bm z)G_{\is}^{\mathrm{den}}(\bm z)$. 	 Let
	\begin{equation}
		\label{eq:F}
		F(\bm z)=\log \ell(\bm z)=\sum_{i=1}^n Y_i\bm x_i^T \bm z - \sum_{i=1}^n \log(1+\exp(\bm x_i^T \bm z)).
	\end{equation}
	We take 
	$$
		\bm \mu_{\star} = \arg \max_{\bm z \in \mathbb{R}^d} \pi(\bm z)= \arg \max_{\bm z \in \mathbb{R}^d} \ell(\bm z)p(\bm z;\bm 0,\bm I_d),
	$$
	which solves
	$$\bm \mu_{\star} = \nabla F(\bm \mu_{\star})=\sum_{i=1}^n \left(Y_i-\frac{\exp(\bm x_i^T  \bm \mu_{\star})}{1+\exp(\bm x_i^T \bm \mu_{\star})}\right) \bm x_i.$$
	By \eqref{eq:solveoptvar}, the covariance matrix for LapIS is given by
	$$
		\bm \Sigma_{\star} = (\bm I_d - \nabla^2 F(\bm \mu_{\star}))^{-1},
	$$
	where the entries of $\nabla^2 F(\bm \mu_{\star})$ are
	\begin{equation}
		\label{eq:nabla2}
		\nabla^2 F(\bm \mu_{\star})_{jk} = - \sum_{i=1}^n \frac{x_{ij} x_{ik} \exp(\bm x_i^T \bm \mu_{\star})}{(1+\exp(\bm x_i^T \bm \mu_{\star}))^2}.
	\end{equation}
	ODIS and LapIS take $p(\bm z;\bm \mu_{\star},\bm I_d)$ and $p(\bm z;\bm \mu_{\star},\bm \Sigma_{\star})$ as the proposal density, respectively, in estimating the numerator and denominator of \eqref{eq:posterior expectation}. 
	
	\begin{theorem}\label{thm:ratio}
		Assume that the test function $f(\bm z)$ satisfies the boundary growth condition. Applying ODIS with the proposal $p(\bm z;\bm \mu_{\star},\bm I_d)$ for both the numerator and denominator of \eqref{eq:posterior expectation}, then the associated RQMC estimators $\hat{I}_N(G_{\is}^{\mathrm{num}})$ and $\hat{I}_N(G_{\is}^{\mathrm{den}})$ have an RMSE of $O(N^{-1+\epsilon})$. The covariance matrix $\bm \Sigma_{\star}$ of LapIS has at least one eigenvalue less than 1, implying that $W(\bm z;\bm \mu_{\star},\bm \Sigma_{\star})$ fails to satisfy the boundary growth condition.
	\end{theorem}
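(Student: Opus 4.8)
The plan is to treat the two assertions separately, each reducing to results already in hand. For the ODIS claim I would first note that with $\bm\Sigma_0=\bm I_d$ and the ODIS choice $\bm\Sigma=\bm I_d$ one may take $\bm L=\bm I_d$, so $\bm L^T\bm\Sigma_0^{-1}\bm L=\bm I_d$ has every eigenvalue equal to $1$; Theorem~\ref{thm:weight} then guarantees that $W(\bm z;\bm\mu_\star,\bm I_d)$ satisfies Assumption~\ref{assum:bgc}. Next I would show that $\ell(\bm\mu_\star+\bm z)$ satisfies Assumption~\ref{assum:bgc} trivially, because it is bounded together with all its mixed partial derivatives: by \eqref{eq:F}, $\ell$ is a finite product of factors of the form $s(\bm x_i^T\bm z)$ with $s$ the logistic function (or $1-s$), whose derivatives of every order are bounded on $\mathbb{R}$, and differentiating with respect to $z_j$ only produces extra bounded factors $x_{ij}$ since the predictors are assumed bounded; hence every $(D_v\ell)$ is bounded, and a bounded function automatically obeys \eqref{eq:dgup} because $(1-\Phi(\abs{z_i}))^{-B_i}\ge 1$. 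Since $f$ satisfies the boundary growth condition by hypothesis and the class of functions obeying \eqref{eq:dgup} with arbitrarily small exponents is closed under products (apply the Leibniz identity \eqref{eq:times} and add exponents), the function $\tilde G(\bm z):=f(\bm\mu_\star+\bm z)\ell(\bm\mu_\star+\bm z)$, as well as $\ell(\bm\mu_\star+\bm z)$ alone, satisfy Assumption~\ref{assum:bgc}. Theorem~\ref{thm:convergencerate} with $\bm\mu=\bm\mu_\star$ and $\bm L=\bm I_d$ then delivers the RMSE bound $O(N^{-1+\epsilon})$ for both $\hat{I}_N(G_{\is}^{\mathrm{den}})$ and $\hat{I}_N(G_{\is}^{\mathrm{num}})$.

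For the LapIS claim the key observation is that $-\nabla^2 F(\bm\mu_\star)$ is positive semidefinite and nonzero. From \eqref{eq:nabla2} one can write $-\nabla^2 F(\bm\mu_\star)=\sum_{i=1}^n w_i\bm x_i\bm x_i^T$ with $w_i=\exp(\bm x_i^T\bm\mu_\star)/(1+\exp(\bm x_i^T\bm\mu_\star))^2>0$; this is a nonnegative combination of rank-one matrices, and since the predictors are not all zero at least one $\bm x_i\bm x_i^T$ is nonzero, so the sum has a strictly positive eigenvalue. Consequently $\bm\Sigma_\star^{-1}=\bm I_d-\nabla^2 F(\bm\mu_\star)=\bm I_d+\sum_{i=1}^n w_i\bm x_i\bm x_i^T$ has an eigenvalue strictly larger than $1$, so $\bm\Sigma_\star$ has an eigenvalue strictly smaller than $1$. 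Because $\bm\Sigma_0=\bm I_d$, Lemma~\ref{orthgonal} (equivalently, the similarity between $\bm L^T\bm L$ and $\bm L\bm L^T$) shows that $\bm L^T\bm\Sigma_0^{-1}\bm L$ has the same eigenvalues as $\bm\Sigma_\star$, hence also an eigenvalue below $1$; Theorem~\ref{thm:weight} then implies that $W(\bm z;\bm\mu_\star,\bm\Sigma_\star)$ fails to satisfy Assumption~\ref{assum:bgc}.

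Overall this is mostly a bookkeeping exercise layered on Theorems~\ref{thm:weight} and~\ref{thm:convergencerate}. The only points needing care are the verification that $\ell(\bm\mu_\star+\bm z)$ has uniformly bounded mixed partial derivatives, which relies on the boundedness of the predictors together with the boundedness of all derivatives of the logistic function, and — for the LapIS part — the observation that $-\nabla^2 F(\bm\mu_\star)$ is not the zero matrix, which is precisely where the standing hypothesis that the predictors are bounded and not all zero enters.
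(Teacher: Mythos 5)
Your proof is correct and follows essentially the same route as the paper's: establish that all mixed partial derivatives of $\ell$ are bounded (the paper does this via $F=\log\ell$ and the chain rule, you via the product of logistic factors with bounded predictors), conclude the boundary growth condition for $f\ell$, and invoke Theorems~\ref{thm:weight} and~\ref{thm:convergencerate} with $\bm L=\bm I_d$. The only substantive difference is in the LapIS part, where you observe directly that $\bm\Sigma_\star^{-1}=\bm I_d+\sum_{i}w_i\bm x_i\bm x_i^T$ is the identity plus a nonzero positive semidefinite matrix and hence has an eigenvalue exceeding $1$, whereas the paper reaches the same conclusion by a trace contradiction (all eigenvalues of $\bm L^T\bm L$ being $\ge 1$ would force $\mathrm{tr}(\bm\Sigma_\star^{-1})\le d$, contradicting $\bm\Sigma^{-1}_{\star,kk}\ge 1$ with strict inequality for some $k$); both arguments are valid, and yours is arguably the more direct.
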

	\begin{proof}
		It is easy to see that $0 < \ell(\bm z) \leq 1$ and for any $u\subset 1{:}d$, $\abs{(D_u F)(\bm z)}$ is bounded, where $F(\bm z)=\log \ell(\bm z)$ is given by \eqref{eq:F}. Let $A=\{A_1,\dots,A_s\}$ be a split of the set $u$ satisfying $A_i\neq \emptyset$, $\cup_{i=1}^sA_i=u$ and $A_i\cap A_j=\emptyset$ for any $i\neq j$. Let $\mathcal{A}$ be the family of such a set $A$.
		Thus $$\abs{(D_u \ell)(\bm z)}=\abs{\ell(\bm z)\sum_{A\in \mathcal{A}}\prod_{w\in A}(D_{w} F)(\bm z)}\le \sum_{A\in \mathcal{A}}\prod_{w\in A}\abs{(D_{w} F)(\bm z)},$$ which is bounded since the $\abs{(D_{w} F)(\bm z)}$ are bounded. Then $\ell(z)$ satisfies the boundary growth condition. Since $f(\bm z)$ satisfies  the boundary growth condition, $f(\bm z)\ell(\bm z)$ also satisfies the boundary growth condition. For ODIS, $\bm\Sigma =\bm I_d$, by Theorem~\ref{thm:convergencerate}, both $\hat{I}_N(G_{\is}^{\mathrm{num}})$ and $\hat{I}_N(G_{\is}^{\mathrm{den}})$ have an RMSE of $O(N^{-1+\epsilon})$.
		
		For LapIS, we take $\bm \Sigma=\bm \Sigma_{\star}$. If all eigenvalues of $\bm L^T \bm L$ are larger than or equal to 1, we have $$\mathrm{tr}(\bm \Sigma^{-1}) = \mathrm{tr}((\bm L \bm L^T)^{-1}) = \mathrm{tr}((\bm L^T \bm L)^{-1}) \leq d.$$
		On the other hand, by \eqref{eq:nabla2}, we have
		$$\bm \Sigma_{kk}^{-1} = 1+\frac{\sum_{i=1}^n x_{ik}^2 \exp(\bm x_i^T \bm \mu)}{(1+\exp(\bm x_i^T \bm \mu))^2} \geq 1,$$ 
		and thus $\mathrm{tr}(\bm\Sigma^{-1}) > d$ since the data are not all zero.
		This contradiction shows that the matrix $\bm L^T \bm L$ has at least one eigenvalue less than 1. Applying Theorem~\ref{thm:weight} with $\bm\Sigma_0=\bm I_d$, we find that
		the boundary growth condition does not hold for the LR function $W(\bm z;\bm \mu_{\star},\bm \Sigma_{\star})$. Finally, it is easy to see that the covariance matrix $\bm \Sigma_{\star}=\bm L\bm L^T$ of LapIS has the same eigenvalues as $\bm L^T \bm L$.
	\end{proof}
	
	Theorem~\ref{thm:ratio} provides the error bounds for both the numerator and denominator estimators for ODIS. We should remark that the resulting ratio estimator given by \eqref{eq:ratio} is not unbiased. The results in Theorem~\ref{thm:ratio} do not render an error bound for the ratio estimator \eqref{eq:ratio}. Instead, we provide numerical results to illustrate the performance of the ratio estimator. 
	
	We take the Labour Force Participation dataset (used also by \cite{tran2021practical}) in the  numerical study, which contains information of 753 women with one binary variable indicating whether or not they are currently in the labour force together with seven covariates such as number of children under 6 years old, age, education level, etc. The model has $d=8$ unknown parameters  including the intercept. The test function we take is $f(\bm z)=\norm{\bm z}_2^2=\sum_{i=1}^d z_i^2$, which  clearly satisfies the boundary growth condition. For comparison, we present the RMSEs of PriorIS ($\bm \mu =\bm 0,\bm\Sigma =\bm I_d$), ODIS ($\bm \mu =\bm \mu_\star,\bm\Sigma =\bm I_d$), LapIS ($\bm \mu =\bm \mu_\star,\bm\Sigma =\bm \Sigma_{\star}$) in both the MC and RQMC settings. PriorIS takes the prior as the IS density, which can be regarded as plain MC or RQMC. The numerator, denominator and the ratio estimators are all investigated. 
	By Theorem~\ref{thm:ratio}, combining ODIS or PriorIS with RQMC shares an error rate of $O(N^{-1+\epsilon})$ for the numerator and denominator estimators.
	
	We report in Figures~\ref{graph1} and \ref{graph2} the numerical results for the first $30$ and $100$ entries of the Labour Force Participation dataset, respectively.
	In the setting of MC, ODIS and LapIS are more effective than using prior as the proposal (i.e., PriorIS), supporting the benefits of using the two IS methods. In this setting, all methods have RMSEs decaying approximately at the canonical MC rate $O(N^{-1/2})$ as the sample size $N$ increases. 
	
	The situation becomes completely different in the RQMC setting. We observe that ODIS and PriorIS converge faster than LapIS in RQMC. PriorIS even performs better than LapIS in RQMC for the small dataset case as shown in Figure~\ref{graph1}. This is because both the ODIS and PriorIS enjoy the faster error rate $O(N^{-1+\epsilon})$ as predicted by our theoretical analysis. For LapIS, as shown in Theorem~\ref{thm:ratio}, the LR function does not satisfy the boundary growth condition, resulting in an unfavorable integrand for RQMC. 
	
	As the data size is increased from $30$ to 100, Figure~\ref{graph2} shows that LapIS has smaller RMSEs than PriorIS in RQMC, but it still does not improve the MC error rate. LapIS benefits a lot from variance reduction of the integrand as the posterior getting closer to a Gaussian distribution. We also did the simulation for the whole dataset of size 753, and observed that LapIS performs overwhelmingly better than ODIS and PriorIS in both MC and RQMC settings. This is due to the fact that the posterior is very close to a Gaussian distribution for the whole dataset as confirmed by \cite{tran2021practical}. Using LapIS gains a much larger effective sample size, reducing the variance greatly although the convergence rate is not improved. Effective sample sizes for ODIS and PriorIS are very small, resulting in erratic results. We thus omit the results here. In conclusion, using LapIS may be risky in RQMC, particularly when the underlying distribution is far away from Gaussian distributions.

	\begin{figure*}[htbp]
		\begin{center}
			\includegraphics[width=\hsize]{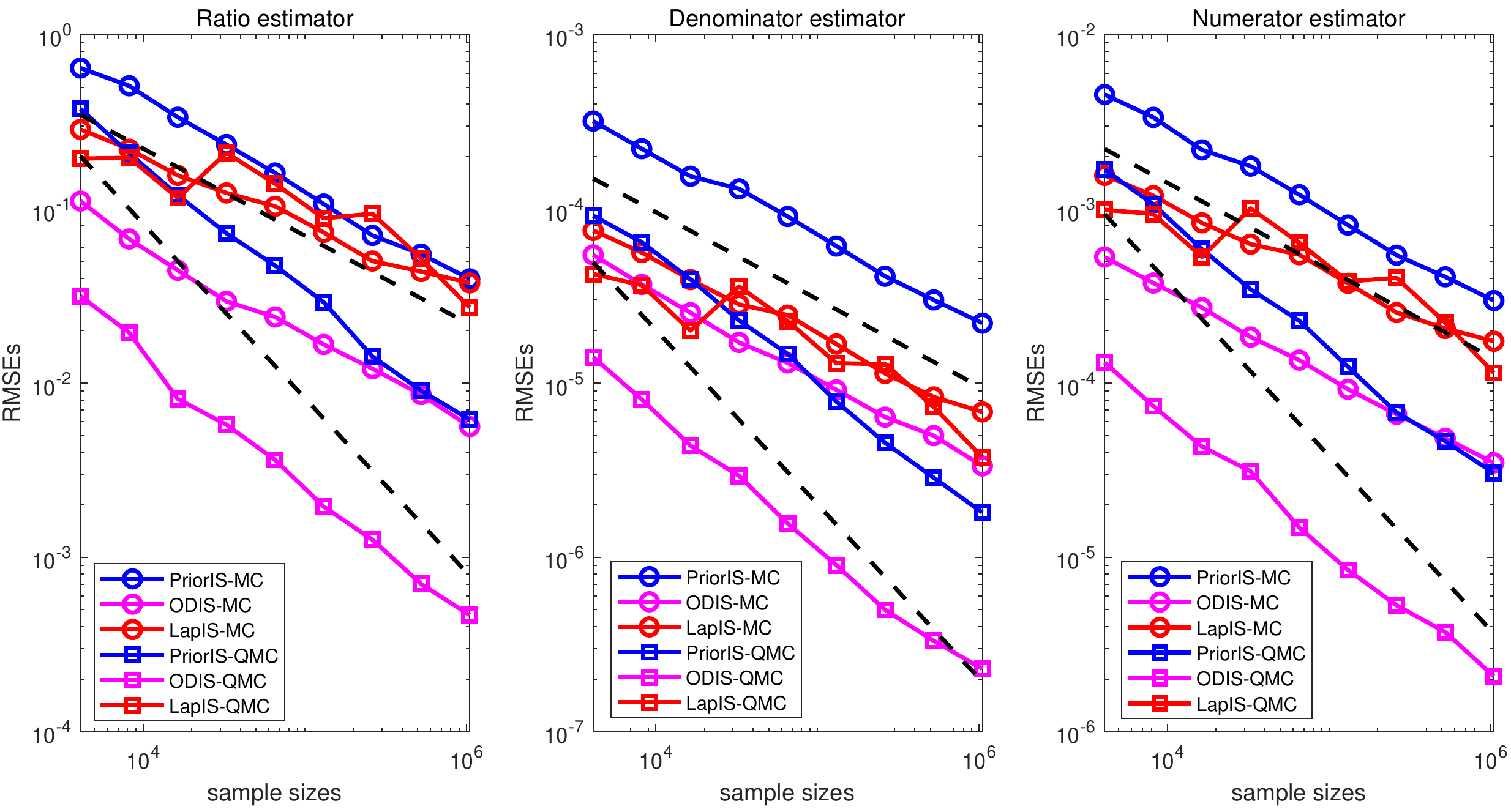}
		\end{center}
		\caption{RMSEs for IS with the multivariate normal distribution. We use the first 30 entries of Labour Force Participation dataset. The dotted lines in the figures represent two convergence rates, $O(N^{-1/2})$ and $O(N^{-1})$ respectively. The RMSEs are computed based on 100 repetitions.}
		\label{graph1}
	\end{figure*}
	
	\begin{figure*}[htbp]
		\begin{center}
			\includegraphics[width=\hsize]{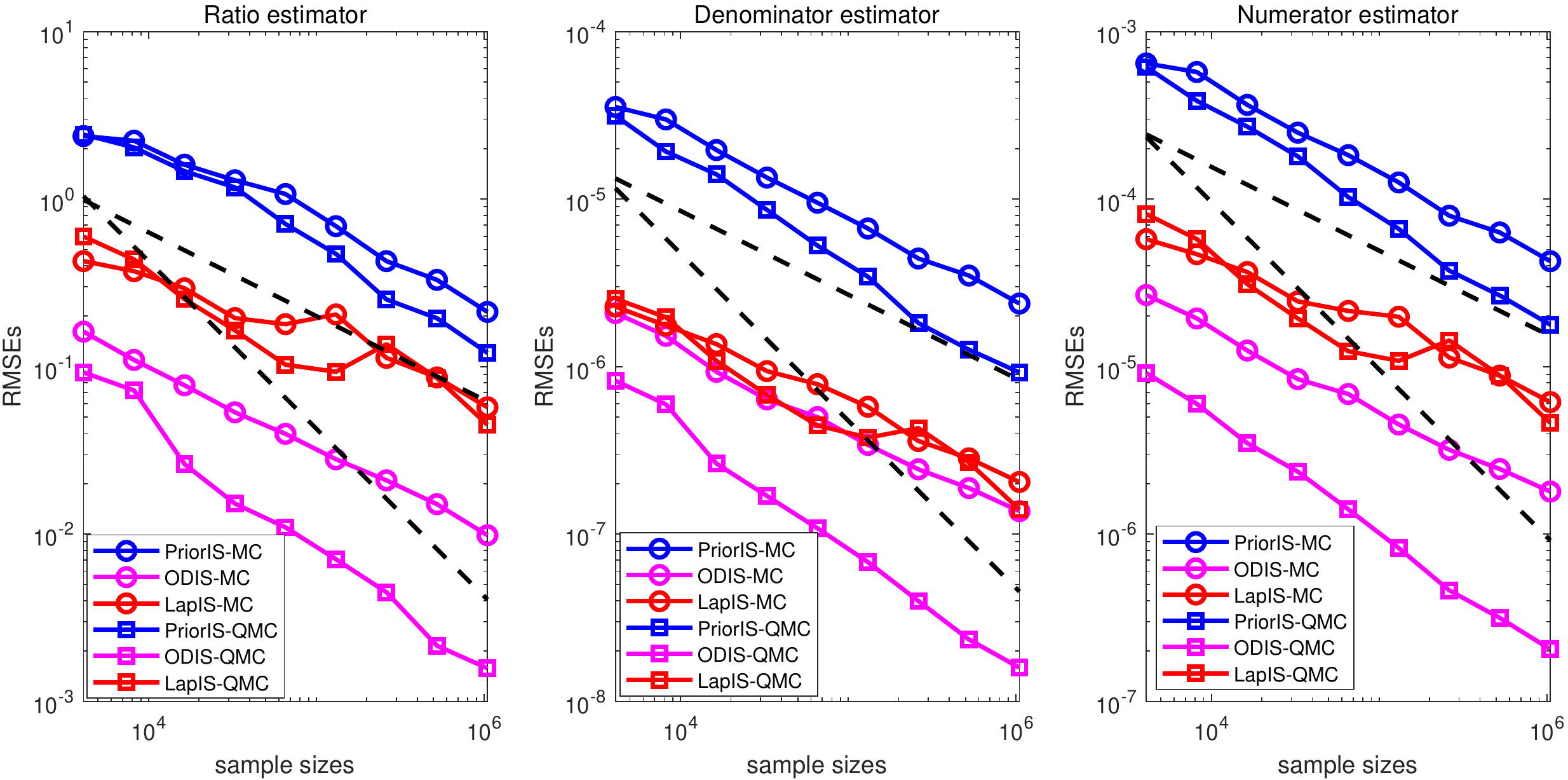}
		\end{center}
		\caption{RMSEs for IS with the multivariate normal distribution. We use the first 100 entries of Labour Force Participation dataset. The dotted lines in the figures represent two convergence rates, $O(N^{-1/2})$ and $O(N^{-1})$ respectively. The RMSEs are computed based on 100 repetitions.}
		\label{graph2}
	\end{figure*}	
	
	Owen \cite{owen2013monte} suggested to use IS with the multivariate $t$ distribution $t_{\nu}(\bm\mu_\star,\bm \Sigma_\star)$ other than $N(\bm\mu_\star,\bm \Sigma_\star)$ for the Bayesian Logistic regression model. Any $\nu$ will lead to an IS distribution with heavier tails than the posterior distribution. We refer to \cite[Chapter 9.7]{owen2013monte} for details. For comparison, we consider three cases of $t_{\nu}(\bm\mu,\bm \Sigma)$, i.e., PriorIS ($\bm \mu =\bm 0,\bm\Sigma =\bm I_d$), ODIS ($\bm \mu =\bm \mu_\star,\bm\Sigma =\bm I_d$), LapIS ($\bm \mu =\bm \mu_\star,\bm\Sigma =\bm \Sigma_{\star}$) in both the MC and RQMC settings. We take a small $\nu=4$.  Figures~\ref{graph11} and  \ref{graph22} show the results for multivariate $t$ distributions as the proposal of IS. Differently from the multivariate normal distributions, RQMC methods yield a similar rate of convergence. LapIS and ODIS in RQMC are comparable, beating PriorIS. This suggests that using IS with  $t_{\nu}(\bm\mu_\star,\bm \Sigma_\star)$ is more robust than that with $N(\bm\mu_\star,\bm \Sigma_\star)$ in QMC.
	
		\begin{figure*}[htbp]
		\begin{center}
			\includegraphics[width=\hsize]{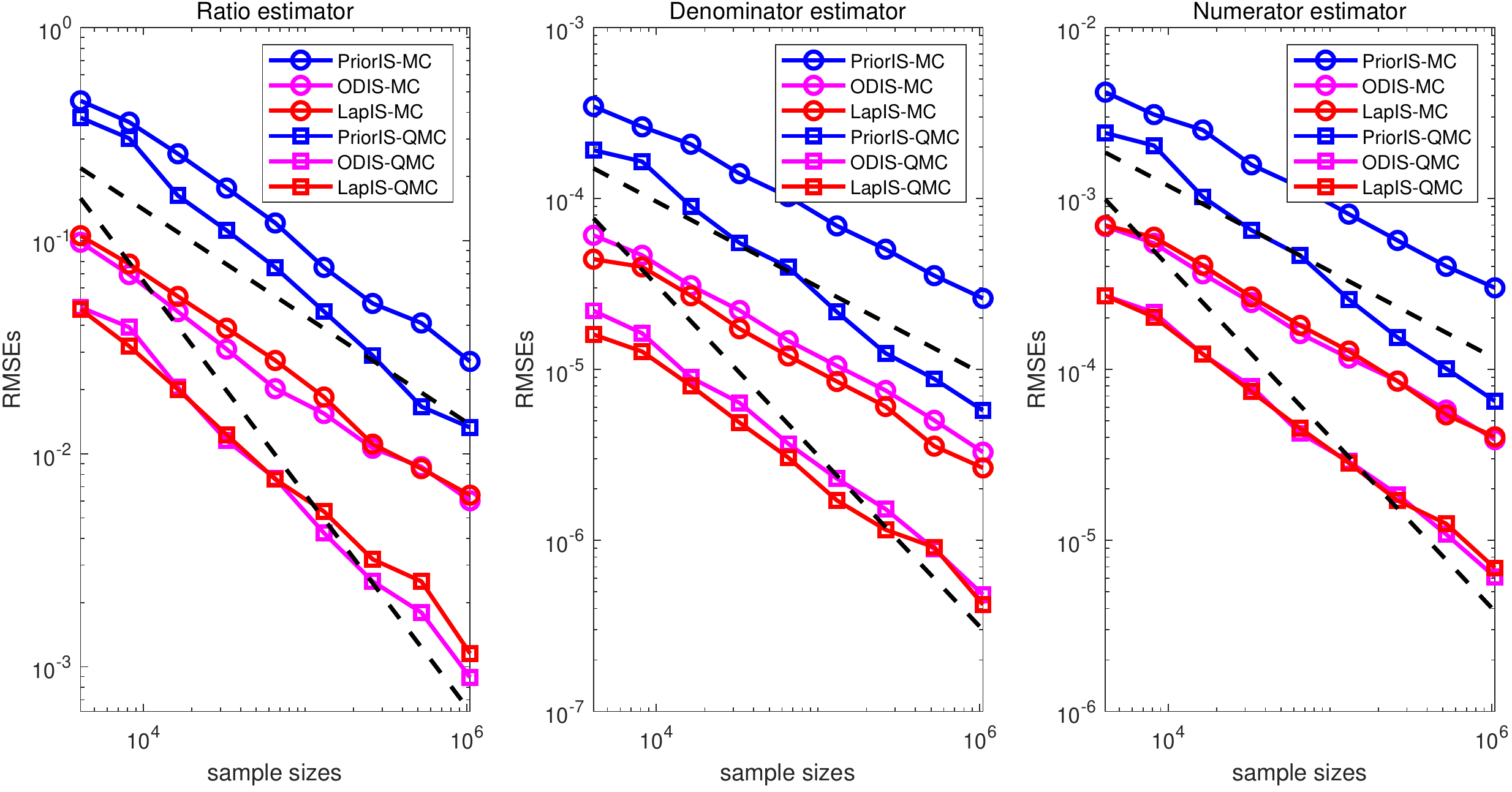}
		\end{center}
		\caption{RMSEs for IS with the multivariate $t$ distribution. We use the first 30 entries of Labour Force Participation dataset. The dotted lines in the figures represent two convergence rates, $O(N^{-1/2})$ and $O(N^{-1})$ respectively. The RMSEs are computed based on 100 repetitions.}
		\label{graph11}
	\end{figure*}
	
	\begin{figure*}[htbp]
		\begin{center}
			\includegraphics[width=\hsize]{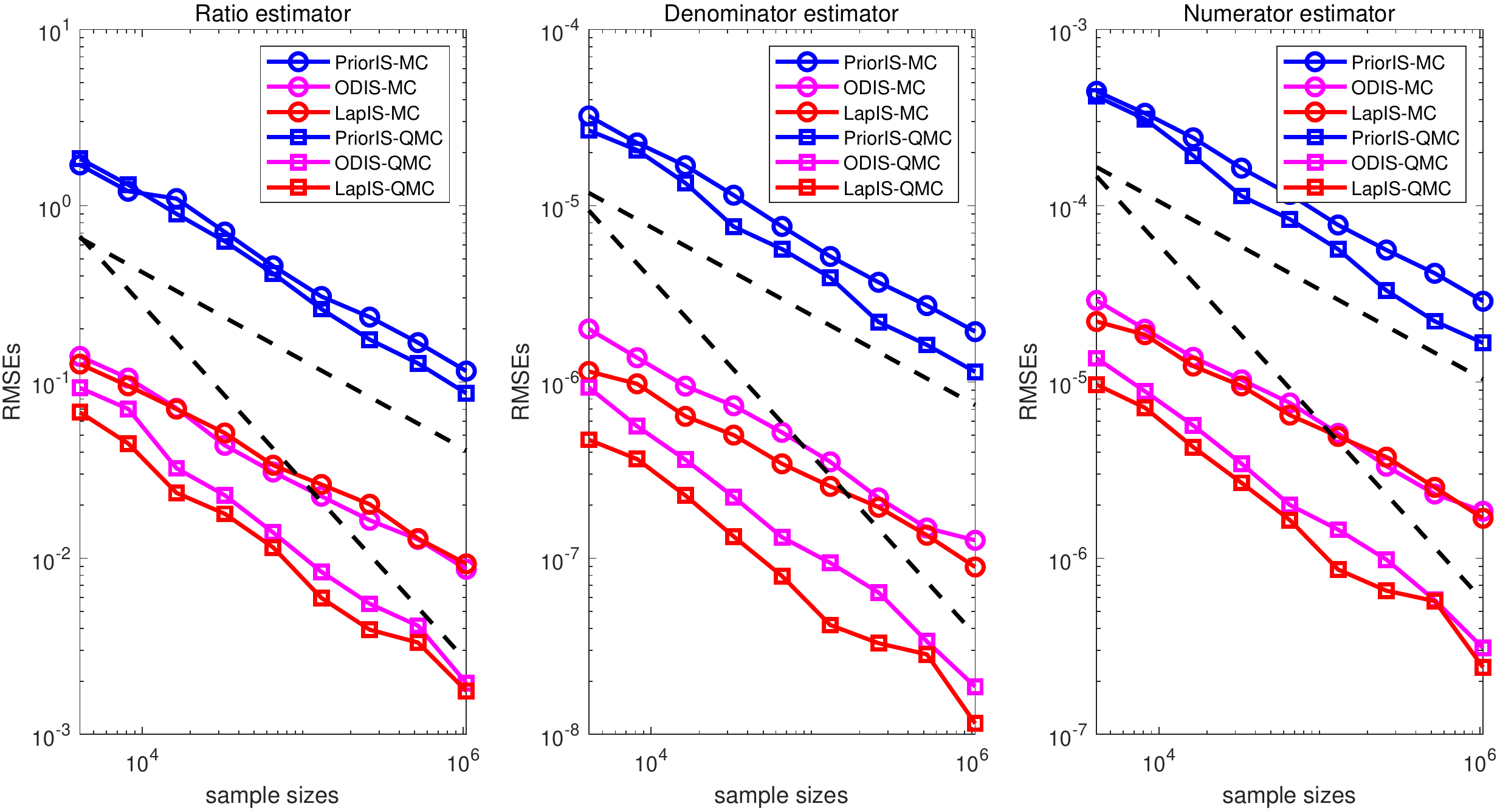}
		\end{center}
		\caption{RMSEs for IS with the multivariate $t$ distribution. We use the first 100 entries of Labour Force Participation dataset. The dotted lines in the figures represent two convergence rates, $O(N^{-1/2})$ and $O(N^{-1})$ respectively. The RMSEs are computed based on 100 repetitions.}
		\label{graph22}
	\end{figure*}	
	
	\section{Conclusion}\label{sec:conc}
	Importance sampling is a classic variance reduction technique widely used in many areas, including finance, rare event simulation, and Bayesian inference. We started from the viewpoint of the boundary growth condition, which is preferable for QMC integration of unbound integrands. We provided sufficient conditions ensuring the boundary growth condition for IS estimators so that nearly $O(N^{-1})$ error rate can be achieved in RQMC with Gaussian or $t$ proposals . We also found that LapIS, a popular IS in practice, does not perform well in the RQMC setting with Gaussian proposals. Generally, an efficient IS in MC is not necessarily efficient in QMC, and vice visa. The ways to assess the performance of IS for MC and QMC are completely different. In addition, when we use $t$ distributions as the proposals, LapIS immediately make a comeback. It is crucial to ask what is a good IS in QMC, beyond the two commonly used LapIS and ODIS. How to choose an appropriate proposal? How to design an IS with a faster convergence rate in QMC, beating LapIS and ODIS? 
	We leave these questions for future research.

	\section*{Appendix}
	In this appendix, we are going to prove Theorem~\ref{thm:mse}.
	Denote $\mu(f) := \int_{(0,1)^d}f(\bm u)d \bm u$, and $\hat\mu_N(f) := \frac{1}{N}\sum_{i=1}^Nf(\bm u_i)$, where $\bm u_1,\dots,\bm u_N$ are a scrambled $(t,m,d)$-net in base $b\ge 2$ with $N=b^m$. Below assume that $f(\bm u)$ satisfies the condition \eqref{eq:grow} in Theorem~\ref{thm:mse}. To avoid the singularities, \cite{owen:2006} used a region as
	\begin{equation*}
		K(\eta) = \left\lbrace\bm u\in[0,1]^d|\prod_{1\le i\le d}\min(u_i,1-u_i)\ge \eta \right\rbrace,
	\end{equation*}
	for small $\eta>0$, and then defined an extension $f_\eta$ of $f$ from $K(\eta)$ to $[0,1]^d$ such that $f_\eta(\bm u)=f(\bm u)$ for $\bm u\in K(\eta)$.  The desired low variation approximation of $f$ is given by
	\begin{equation*}\label{eq:extg}
		f_\eta(\bm u) = f(\bm{c})+ \sum_{v\neq \emptyset}\int_{[\bm c^v,\bm{u}^v]}\partial^{v}f(\bm z^v{:}\bm c^{-v})1\{\bm z^v{:}\bm c^{-v}\in K(\eta)\}d \bm z^v,
	\end{equation*}
	where $\bm c = (1/2,\dots,1/2)$, $\bm z^v{:}\bm c^{-v}$ denotes the point $\bm y\in[0,1]^d$ with $y_j=z_j$ for $j\in v$ and $y_j=c_j$ for  $j\notin v$.
	
	\begin{lem}\label{lem:app}
		If $f$ satisfies the boundary growth condition \eqref{eq:grow}, then for any $\epsilon>0$ there exists  $C_{\epsilon}<\infty$ such that
		\begin{align}
			\mu(\abs{f-f_\eta})&\leq C_{\epsilon}\eta^{1-\max_iB_i-\epsilon},\label{eq:uperror}\\
			\mu((f-f_\eta)^2)&\leq C_{\epsilon}\eta^{1-2\max_iB_i-\epsilon},\label{eq:uperror2}\\
			V_{\mathrm{HK}}(f_\eta)&\leq C_{\epsilon} \eta^{-\max_i B_i-\epsilon}.\label{eq:upHK}
		\end{align}
		If there is a unique maximum among $B_1,\dots,B_d$, then the inequalities hold with $\epsilon=0$.
	\end{lem}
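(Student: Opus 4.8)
The plan is to reduce all three estimates to two explicit integral computations and to evaluate those via the change of variables $t_i:=\min(u_i,1-u_i)=e^{-s_i}$. The mean-error bounds \eqref{eq:uperror} and \eqref{eq:upHK} follow the lines of \cite{owen:2006}; the genuinely new ingredient — the one that upgrades the mean-error rate of \cite[Theorem~5.7]{owen:2006} to the RMSE rate of Theorem~\ref{thm:mse} — is the $L^2$ bound \eqref{eq:uperror2}.

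The first computation concerns the ``bad region'': for exponents $\beta_i\in(0,1)$, coordinate symmetry and the substitution above give
\[
\int_{[0,1]^d\setminus K(\eta)}\prod_{i=1}^d[\min(u_i,1-u_i)]^{-\beta_i}\,d\bm u\ \le\ 2^d\!\!\int_{\sum_i s_i>\log(1/\eta),\ s_i\ge0}\ \prod_{i=1}^d e^{-(1-\beta_i)s_i}\,d\bm s,
\]
and, since $\sum_i s_i>\log(1/\eta)>0$ and $1-\max_j\beta_j-\epsilon>0$, pulling the factor $e^{-(1-\max_j\beta_j-\epsilon)\sum_i s_i}\le\eta^{1-\max_j\beta_j-\epsilon}$ out of the integral leaves $\int_{[0,\infty)^d}\prod_i e^{-(\max_j\beta_j-\beta_i+\epsilon)s_i}\,d\bm s<\infty$, so the left side is $O(\eta^{1-\max_i\beta_i-\epsilon})$. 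The second computation concerns $K(\eta)$: with the same substitution, the integral of $\prod_{i\in w}[\min(u_i,1-u_i)]^{-(B_i+1)}$ over the part of a $w$-face lying in $K(\eta)$ becomes $\int\prod_{i\in w}e^{B_is_i}\,d\bm s_w$ over a simplex $\{s_i\ge0,\ \sum_{i\in w}s_i\le\log(1/(c\eta))\}$, which is at most $(\text{simplex volume})\times e^{(\max_iB_i)\log(1/(c\eta))}=O((\log(1/\eta))^{|w|}\eta^{-\max_iB_i})=O(\eta^{-\max_iB_i-\epsilon})$. In both computations, when the maximal $B_i$ is unique one integrates the variables one at a time instead, and the resulting geometric series converge with no slack, giving the $\epsilon=0$ versions.

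Assembling: since $f=f_\eta$ on $K(\eta)$, the left sides of \eqref{eq:uperror} and \eqref{eq:uperror2} are integrals over $[0,1]^d\setminus K(\eta)$ only; there $\abs{f(\bm u)}\le B\prod_i[\min(u_i,1-u_i)]^{-B_i}$ by \eqref{eq:grow}, and because truncating the anchored-at-$\bm c$ expansion of $f$ by $1\{\cdot\in K(\eta)\}$ only shrinks each term in modulus, $f_\eta$ obeys the same bound with a larger constant; the first computation with $\beta_i=B_i$ and with $\beta_i=2B_i$ then gives \eqref{eq:uperror} and \eqref{eq:uperror2} respectively. For \eqref{eq:upHK}, split $[0,1]^d$ at $u_i=1/2$ in each coordinate into $2^d$ sub-cubes; on each, $f_\eta$ equals the expansion anchored at the centre $\bm c$, whose restriction to a $w$-face has $w$-th mixed partial $\pm(\partial^wf)(\bm u^w{:}\bm c^{-w})\,1\{\bm u^w{:}\bm c^{-w}\in K(\eta)\}$, so $V_{\mathrm{HK}}(f_\eta)$ is a finite sum of integrals of the second computation's type and is $O(\eta^{-\max_iB_i-\epsilon})$.

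The main obstacle is \eqref{eq:uperror2}: one must recover the exponent $1-2\max_iB_i$, with the $2$ multiplying the maximum rather than merely squaring the $L^1$ exponent. This is precisely what $t_i=e^{-s_i}$ delivers — squaring the singularity turns the exponential rate $1-\beta_i$ into $1-2\beta_i$ — the only cost being that the reference integral converges only when $2\max_iB_i<1$, i.e.\ when $f$ is square integrable (consistently, both sides of \eqref{eq:uperror2} are infinite when $2\max_iB_i\ge1$). The remaining points — the pointwise bound for $f_\eta$ and not just $f$, and the splitting of the Hardy--Krause variation at $u_i=1/2$ — are routine, but must be carried out carefully so that every constant stays independent of $\eta$.
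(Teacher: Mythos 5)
Your proof is correct and follows essentially the same route as the paper, which simply defers \eqref{eq:uperror} and \eqref{eq:upHK} to the proof of Theorem 5.5 of Owen (2006) and obtains \eqref{eq:uperror2} ``the same way''---i.e., by squaring the pointwise bound $|f-f_\eta|\le C\prod_i[\min(u_i,1-u_i)]^{-B_i}$ off $K(\eta)$ and doubling the exponents in the tail integral, exactly as you do. Two cosmetic quibbles: the justification that truncation ``only shrinks each term in modulus'' should instead bound each truncated term by $\int|\partial^v f|$ over the anchored box (sign cancellation prevents a literal termwise comparison, though the growth condition gives the needed bound either way), and when $\max_i B_i\ge 1/2$ the right-hand side of \eqref{eq:uperror2} is a finite number while the left-hand side may be $+\infty$, so the restriction $\max_i B_i<1/2$ you identify is a genuine (if immaterial for the paper's application, where the $B_i$ are arbitrarily small) hypothesis implicit in the lemma rather than a case where both sides blow up.
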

	
	\begin{proof}
		See the proof of Theorem 5.5 in \cite{owen:2006} for establishing \eqref{eq:uperror} and \eqref{eq:upHK}. The inequality \eqref{eq:uperror2} can be proved by the same way of proving the inequality \eqref{eq:uperror}.
	\end{proof}
	
Using triangle inequality gives
	\begin{equation}
		\abs{\hat\mu_N(f)-\mu(f)}\leq \abs{\hat \mu_N(f-f_\eta)}+\abs{\hat\mu_N(f_\eta)-\mu(f_\eta)}+\mu(\abs{f-f_\eta}).\label{eq:trin}
	\end{equation}
	Since each $\bm u_i\sim U{(0,1)^d}$ individually \cite{Owen1995}, we have 
	\begin{equation*}
		\E{\abs{\hat \mu_N(f-f_\eta)}}\leq \frac 1 N \sum_{i=1}^N \E{\abs{f(\bm u_i)-f_\eta(\bm u_i)}}=\mu{(|f-f_\eta|)}.
	\end{equation*}
Using the  Koksma-Hlawka inequality \eqref{eq:hk} with \eqref{eq:upHK}, we find that with probability 1 (w.p.1),
\begin{equation}\label{eq:hkineq}
	\abs{\hat\mu_N(f_\eta)-\mu(f_\eta)}\le V_{\mathrm{HK}}(f_\eta)D^*(\{\bm u_1,\dots,\bm u_N\})=O(\eta^{-\max_i B_i-\epsilon})\times O(N^{-1+\epsilon}),
\end{equation}
for any $\epsilon>0$. In \eqref{eq:hkineq}, we used the fact that scrambled $(t,m,d)$-net is also a $(t,m,d)$-net w.p.1 \cite{Owen1995}, and $D^*(\{\bm u_1,\dots,\bm u_N\})=O(N^{-1}(\log N)^{d})=O(N^{-1+\epsilon})$ for hiding the logarithmic term.
	
By \eqref{eq:uperror}, \eqref{eq:trin} and \eqref{eq:hkineq}, we bound the mean error via
	\begin{align*}
		\E{\abs{\hat\mu_N(f)-\mu(f)}}&\leq 2\mu{(|f-f_\eta|)}+\E{\abs{\hat\mu_N(f_\eta)-\mu(f_\eta)}}\\
		&=O(\eta^{1-\max_iB_i-\epsilon})+O(\eta^{-\max_i B_i-\epsilon})\times O(N^{-1+\epsilon}).
	\end{align*}
Taking $\eta \propto N^{-1}$, the mean error is then of  $\E{\abs{\hat\mu_N(f)-\mu(f)}}=O(N^{-1+\max_i B_i+\epsilon})$ for arbitrarily small $\epsilon>0$, where the arbitrary $\epsilon$ values were adjusted to correspond. This is the main result of \cite{owen:2006}. However, \cite{owen:2006} did not bound the RMSE of $\hat\mu_N(f)$.

To get the analog result for the RMSE, taking the square of \eqref{eq:trin} and the expectation gives
\begin{align*}
	\E{(\hat\mu_N(f)-\mu(f))^2}\leq 3\E{\hat \mu_N(f-f_\eta)^2}+3\E{(\hat\mu_N(f_\eta)-\mu(f_\eta))^2}+3\mu(\abs{f-f_\eta})^2.
\end{align*}
It remains to bound $\E{\hat \mu_N(f-f_\eta)^2}$. We should note that $\bm u_i$ are not independent. By Cauchy-Schwarz inequality and \eqref{eq:uperror2}, we have
\begin{align}
\E{\hat \mu_N(f-f_\eta)^2} &= \E{\left(\frac 1 N\sum_{i=1}^N (f(\bm u_i)-f_\eta(\bm u_i))\right)^2}\notag\\&\le \E{\frac 1 N\sum_{i=1}^N (f(\bm u_i)-f_\eta(\bm u_i))^2}\notag\\&=\E{(f(\bm u)-f_\eta(\bm u))^2}\notag\\&=\mu((f-f_\eta)^2)=O(\eta^{1-2\max_iB_i-\epsilon}).\label{eq:conver}
\end{align}
By using Lemma~\ref{lem:app}, we find that 
\begin{align*}
 	&\E{(\hat\mu_N(f)-\mu(f))^2}\\
 	&=O(\eta^{1-2\max_iB_i-\epsilon})+O(\eta^{-2\max_i B_i-2\epsilon})\times O(N^{-2+2\epsilon})+O(\eta^{2-2\max_iB_i-2\epsilon})\\
 	&=O(\eta^{1-2\max_iB_i-\epsilon})+O(\eta^{-2\max_i B_i-2\epsilon})\times O(N^{-2+2\epsilon}).
\end{align*}
Taking the optimal $\eta\propto N^{-2}$, the RMSE is then $O(N^{-1+2\max_iB_i+\epsilon})$ for arbitrarily small $\epsilon>0$, where the arbitrary $\epsilon$ values were again adjusted to correspond. This suggests that RQMC beats MC when $\max_iB_i<1/4$. Apparently, the RMSE rate is worse than the mean error rate $O(N^{-1+\max_iB_i+\epsilon})$ established before. This is due to the fact that the inequality \eqref{eq:conver} we used is conservative. 

To get an improved upper bound for $\E{\hat \mu_N(f-f_\eta)^2}$, we next make use of a good property of scrambled net quadrature. That is, the scrambled net variance  is no worse than $\Gamma=b^t\left(\frac{b+1}{b-1}\right)^d$ times MC variance \cite{owen1997a}. Based on this result, we find that
\begin{align*}
	\E{\hat \mu_N(f-f_\eta)^2} &= \var{\hat \mu_N(f-f_\eta)}+(\E{\hat \mu_N(f-f_\eta)})^2\\&\le \Gamma\times\frac{ \var{f(\bm u)-f_\eta(\bm u)}}{N}+\mu(|f-f_\eta|)^2\\&\le \Gamma\times\frac{\mu((f-f_\eta)^2)}{N}+\mu(|f-f_\eta|)^2\\
	&=O(\eta^{1-2\max_iB_i-\epsilon}N^{-1}) +O(\eta^{2-2\max_iB_i-2\epsilon}).
\end{align*}
We therefore have
\begin{align*}
	&\E{(\hat\mu_N(f)-\mu(f))^2}\\&=O(\eta^{1-2\max_iB_i-\epsilon}N^{-1})+O(\eta^{2-2\max_iB_i-2\epsilon})+O(\eta^{-2\max_i B_i-2\epsilon})\times O(N^{-2+2\epsilon}).
\end{align*}
Taking the optimal $\eta\propto N^{-1}$, the RMSE is improved to $O(N^{-1+\max_iB_i+\epsilon})$ for arbitrarily small $\epsilon>0$, which completes the proof of Theorem~\ref{thm:mse}.

The arguments above hold also when one uses the first $N$ points of a scrambled $(t,d)$-sequence without requiring the constraint $N=b^m$ on the sample size, but for a different value of $\Gamma<\infty$ (see \cite{gerb:2015} for the details).

\end{document}